\title{Feasible Inference for Stochastic Volatility in Brownian Semistationary Processes}
\DeclareMathOperator{\Var}{Var}
\DeclareMathOperator*{\plim}{plim}
\newtheorem{theorem}{Theorem}[section]
\newtheorem{proposition}{Proposition}[section]
\theoremstyle{definition}
\theoremstyle{remark}
\author{
  {Phillip Murray}\thanks{Corresponding author: Department of Mathematics, Imperial College London, South Kensington Campus, London SW7 2AZ, UK}\\
  Imperial College London\\
  \texttt{phillip.murray18@imperial.ac.uk}\\
  \And
    Riccardo Passeggeri\\
  Sorbonne University\\
  \texttt{riccardo.passeggeri14@imperial.ac.uk}\\
  \And
    Almut E.D. Veraart \\
  Imperial College London\\
  \texttt{a.veraart@imperial.ac.uk}\\
  \And
    Mikko S. Pakkanen \\
  Imperial College London\\
  \texttt{m.pakkanen@imperial.ac.uk}\\  
}
\begin{document}
\maketitle

\begin{abstract}
This article studies the finite sample behaviour of a number of estimators for the integrated power volatility process of a Brownian semistationary process in the non-semi-martingale setting. We establish three consistent feasible estimators for the integrated volatility, two derived from parametric methods and one non-parametrically. We then use a simulation study to compare the convergence properties of the estimators to one another, and to a benchmark of an infeasible estimator. We further establish bounds for the asymptotic variance of the infeasible estimator and assess whether a central limit theorem which holds for the infeasible estimator can be translated into a feasible limit theorem for the non-parametric estimator. 
\end{abstract}

\section{Introduction}
In this article we study the asymptotic behaviour of the realised power variation in relation to a class of stochastic processes known as \textit{Brownian semistationary} ($\mathcal{BSS}$) processes. These processes were first introduced by Barndorff-Nielsen and Schmiegel \cite{barndorff2007ambit} \cite{barndorff2009brownian} and we will focus on a particular subclass of these processes, namely zero mean and driftless variants expressed by an integral representation 

$$ Y_t := \int_{-\infty}^t g(t-s) \sigma_s \ \mathrm{d}W_s, $$
where $W$ is a two-sided Brownian motion which provides the driving noise, $\sigma$ is a stochastic volatility process, and $g$ is a deterministic kernel function that specifies the historical dependence of the process, both in terms of the short term smoothness of the paths, and the long term memory. $\mathcal{BSS}$ processes can be seen as volatility modulated continuous time moving average processes, and fit into a much broader class of L\'{e}vy driven processes, see \cite{barndorff2018ambit} for a full exploration. They are of particular use in the modelling of turbulence of physical systems  \cite{barndorff2007ambit} \cite{corcuera2013asymptotic},  and also have applications in finance, such as models of energy prices \cite{barndorff2013modelling} \cite{bennedsen2017rough}.

An important quantity of interest for these processes is the \textit{integrated power volatility process},

$$ V(Y,p)(t) := m_p \int_0^t |\sigma_s|^p \ \mathrm{d}s,$$
where $m_p$ is the $p$th absolute moment of a standard normal distribution. Of particular interest is the case $p=2$ which gives the integrated squared volatility process. When $Y$ is a semi-martingale, it has been well established that the \textit{realised power variation}, defined by

$$ V(Y, p; \Delta_n)(t) := \sum_{i = 1}^{ \lfloor t/\Delta_n \rfloor} |\Delta_i^nY |^p, \qquad \Delta_i^nY = Y_{i\Delta_n} - Y_{(i-1)\Delta_n}, $$
produces a consistent estimator for the integrated volatility process \cite{jacod2008asymptotic}. In the non-semi-martingale setting, the realised power variation alone is no longer a consistent estimator for the integrated volatility. We instead scale the realised power variation using a scale factor $\tau_n$, that depends on the short term structure of the process, in particular, on the smoothness properties of the kernel function. 

It has been proven that in the non-semi-martingale case, this suitably scaled realised power variation is a consistent estimator for the integrated power volatility process, and also obeys a central limit theorem \cite{corcuera2013asymptotic}. This estimator is, however, \textit{infeasible} in the sense that the scale factor must be known, which in turn requires knowing the kernel functional form and parameters, which in general, for real-world applications where we have observed data, it will not be. We therefore refer to these previous results as the \textit{infeasible weak law} and the \textit{infeasible central limit theorem} respectively. This article extends these results to the feasible case, where the scale factor is also estimated from the data, enabling us to establish a \textit{feasible weak law}. We briefly describe two parametric methods which produce feasible consistent estimators after specification of a kernel function, and then focus on  establishing a non-parametric feasible estimator, which requires no assumption on the functional form of the kernel. The question of whether the limit theorem can be translated into a \textit{feasible central limit theorem} is also explored, and we assess how estimation of the scale factor effects the nature of the convergence, both in terms of the rate, and also the limiting distribution. 

The theoretical results are then assessed through a simulation study, to assess the performance of each of the asymptotic results in finite sample behaviour. The convergence relies on both \textit{infill} and \textit{long-span} asymptotics. We simulate a large number of sample paths from a $\mathcal{BSS}$ process, and use the paths to estimate the integrated power volatility process, using both the infeasible and feasible estimators and compare the result, in order to establish at what frequencies the weak law and limit theorem begin to converge, using the infeasible estimator as the benchmark of the `ideal' scenario, and comparing feasible estimator to this. We further test experimentally whether the limit theorem holds for our non-parametric estimator. The code implementation has been made into a new \texttt{R} package, \texttt{BSS}, available on CRAN \cite{package}.

This article is structured as follows. In section 2, we begin by outlining the background theory into $\mathcal{BSS}$ processes and establish the core asymptotic theory, first in the infeasible setting, and then extend it to the feasible case. We prove the consistency of these estimators and hence establish their use in our feasible estimation of the integrated volatility process. In section 3 we provide details of the simulation study methods, and discuss the results of the simulation study in section 4. We then provide an application of the methods to a dataset in section 5, before concluding. Proofs of the key result of the feasible weak law and central limit theorem are provided at the end of the article.

\section{Model and theoretical results}
\subsection{Brownian semistationary processes}
Let $(\Omega, \mathcal{F}, \{\mathcal{F}_t\}_{t \in \mathbb{R}}, \mathbb{P})$ be a filtered probability space satisfying the conditions of completeness and right continuity of the filtration. A Brownian semistationary $(\mathcal{BSS})$ process is a stochastic process $Y =\{Y\}_{t\in \mathbb{R}_{+}}$ defined on this space by 

$$ Y_t = \mu + \int_{-\infty}^t h(t-s)a_s \ \mathrm{d}s + \int_{-\infty}^t g(t-s)\sigma_s \  \mathrm{d}W_s$$
where $\mu \in \mathbb{R}$ is a constant, $h \in L^1(\mathbb{R})$ and $g \in L^2(\mathbb{R})$ are non-negative deterministic kernel functions satisfying $h(t) = g(t) = 0$ for $t \leq 0$. The stochastic processes $a = \{a_t\}_{t \in \mathbb{R}}$ and $\sigma = \{\sigma_t\}_{t \in \mathbb{R}}$ are adapted to the filtration $\{\mathcal{F}_t\}_{t \in \mathbb{R}}$ such that all integrals exist and are well defined. Furthermore, $W = \{W_t\}_{t \in \mathbb{R}}$ is a two-sided Brownian motion which is also adapted to the filtration $\{\mathcal{F}_t\}_{t \in \mathbb{R}}$, where by two-sided we mean that we take two independent copies of a standard Brownian motion $W^1_t$ and $W^2_t$ each defined on $\mathbb{R}_{+}$ as usual, and define $W_t = W^1_t$ for $t \geq 0$ and $ W_t = W^2_{-t_{-}}$ for $t < 0$. 

The stochastic processes $a = \{a_t\}_{t \in \mathbb{R}}$ and $\sigma = \{\sigma_t\}_{t \in \mathbb{R}}$ are referred to as the \textit{drift} and \textit{volatility} processes respectively.  Clearly, the constant $\mu$ does not affect the stochastic behaviour of the process $Y$ and we are primarily focussed on inference on the underlying volatility process, we will consider only driftless $\mathcal{BSS}$ processes, which for convenience have zero mean, and hence we will set $\mu \equiv 0$ and $a \equiv 0$ for all $t \in \mathbb{R}_{+}$. Therefore, we will consider the simplified class of $\mathcal{BSS}$ processes defined by 

$$ Y_t =\int_{-\infty}^t g(t-s)\sigma_s \ \mathrm{d}W_s$$
where $\sigma$ is a positive, c\`{a}dl\`{a}g process, and to ensure that this integral is well-defined, we will require that the square integrable kernel function $g$ is such that 

$$ \int_{-\infty}^t g(t - s)^2 \sigma_s^2 \ \mathrm{d}s < \infty \qquad \text{a.s.}$$
for all $t \geq 0$. Furthermore, we will assume that the volatility process $\sigma$ is stationary with finite second moments. 

Associated with the $\mathcal{BSS}$ process $Y$ is a stationary centred Gaussian process $X = \{X_t\}_{t \in \mathbb{R}}$, called the Gaussian core of $Y$, which is given by

$$X_t = \int_{-\infty}^t g(t-s) \ \mathrm{d}W_s.$$

It is useful to define the autocovariance function for the Gaussian core, for lag $h \geq 0$

$$ \gamma_X(h) = \int_0^\infty g(x)g(x + h) \ \mathrm{d}x,$$
and the autocorrelation kernel of the $\mathcal{BSS}$ process, given by

$$\rho_X(h) = \frac{\int_0^\infty g(x)g(x + h) \  \mathrm{d}x}{\int_0^\infty g(x)^2 \ \mathrm{d}x}.$$

It is straightforward to show that $\gamma_Y(h) = \mathbb{E}[\sigma_0^2] \gamma_X(h)$. This result in particular demonstrates the role of the kernel function $g$ in the `memory' or correlation structure of the $\mathcal{BSS}$ process, since we have $\rho_Y(h) = \rho_X(h)$ for all $h \geq 0$, and thus the correlation structure of $Y$ is identical to the correlation kernel of the underlying Gaussian core $X$, and does not depend on the stochastic volatility process $\sigma$. A further quantity which will prove to be important is the \textit{second-order structure function}, or \textit{variogram}, 

$$R(t) = \mathbb{E}[(X_t - X_0)^2], \qquad t \geq 0,$$
of the Gaussian core of the process.

\subsection{Kernel functions}
We require various conditions on the kernel 
 $g:\mathbb{R} \rightarrow \mathbb{R}$ in order for the $\mathcal{BSS}$ process to be well behaved \cite{barndorff2014assessing}. It holds that

\begin{enumerate}[(i)]
\item $g(x) = x^\alpha L_g(x)$, where $L_g$ is some function that is slowly varying at $0$, that is, the ratio $L_g(tx) / L_g(x) \rightarrow 1$ as $x \rightarrow 0$ for any number $t > 0$. 

\item The function $g$ is continuously differentiable on $(0, \infty)$ with $g'(x) = x^{\alpha -1}L_{g'}(x)$ and we have $g' \in L^2(\epsilon, \infty)$ for any $\epsilon > 0$. Moreover, $g'$ is non-decreasing on $(a, \infty)$ for some $a>0$, so that the derivative is ultimately monotonic. 

\item $\int_1^\infty g'(s)^2 \sigma_{t-s}^2 \ \mathrm{d}s < \infty$ a.s. for any $t>0$.

Furthermore, the second-order structure function $R$ satisfies:

\item $R(t) = t^{2\alpha + 1} L_R(t).$
 
\item $R''(t) = t^{2\alpha -1} L_{R''}(t).$

\item For some $b \in (0,1),$ 

$$\limsup\limits_{s\downarrow 0} \sup\limits_{t \in [s,s^b]} \left\vert \frac{L_{R''(t)}}{L_R(s)} \right\vert < \infty.$$

\end{enumerate}

The parameter $\alpha$ is referred to as the \textit{smoothness parameter}, as it controls the local behaviour of the paths. A consequence of Knight's theorem \cite{knight1992foundations} is that for $\alpha \in (-\frac{1}{2},0) \cup (0, \frac{1}{2})$, the process $Y$ will not be a semi-martingale \cite{barndorff2009brownian}, which is of particular interest in many applications such as the study of turbulence, such as $\alpha = -1/6$ corresponding to Kolomogorov's scaling law in turbulence \cite{kolmogorov1941dissipation}. There is one particular kernel choice which is widely used and of interest to us, the gamma kernel defined as 

$$g(x) = x^\alpha e^{-\lambda x}$$
with parameters $\alpha \in (-\frac{1}{2},0) \cup (0, \frac{1}{2})$ and $\lambda > 0$. It is a well studied kernel choice in the $\mathcal{BSS}$ literature, because it relates closely to the study of turbulence, and also provides a generalisation of the Ornstein-Uhlenback process with the inclusion of the roughness parameter which takes it out of the semi-martingale case - the Ornstein-Uhlenbeck being a special case of the gamma kernel with parameter $\alpha = 0$.  The gamma kernel decays exponentially to zero as $x \rightarrow \infty$, with the rate controlled by the parameter $\lambda$. See, for example, \cite{barndorff2012notes} for more detailed exploration of the gamma kernel.

\subsection{Asymptotic theory}

We now move on to establishing the core finite sample behaviour of the $\mathcal{BSS}$ process observed in discrete time. Suppose that for some $T>0$, we have observations on a fine grid $Y_{i\Delta_n}$ for $i = 0, ..., \lfloor T/\Delta_n \rfloor$ with $\Delta_n \rightarrow 0$ as $n \rightarrow \infty$, and define the first order increments to be

$$\Delta_i^nY = Y_{i\Delta_n} - Y_{(i-1)\Delta_n}.$$

Define further the $p$th order \textit{realised power variation} of $Y$ at time $0<t\leq T$, observed at frequency $\Delta_n$, by

$$ V(Y, p; \Delta_n)(t) :=  \sum_{i = 1}^{ \lfloor t/\Delta_n \rfloor} |\Delta_i^nY |^p.$$

Now, in the case that $Y$ is a semi-martingale, $ \Delta_n^{1-p} V(Y,p;\Delta_n)(t)$ is a consistent estimator for 

$$V(Y,p)(t) = g(0+)^2 m_p \int_0^t |\sigma_s|^p \ \mathrm{d}s$$ 
as $\Delta_n \rightarrow 0$, where $m_p := \mathbb{E}[|U|^p]$ for a standard normal random variable $U \sim N(0,1)$ \cite{barndorff2006central} \cite{jacod2008asymptotic}. However, outside the semi-martingale setting these results for convergence of the realised power variation no longer hold. Instead, alternative convergence results have been established, in which we scale the realised power variation by a suitable factor which depends on $\Delta_n$ and the scaling properties of $Y$, which we will now outline.

We first define the quantity $\tau_n$ from the second order structure function of the Gaussian core $X$ as $\tau_n := \sqrt{R(\Delta_n)}$, and it will be convenient to note that $\tau_n = \sqrt{2\gamma_X(0) - 2\gamma_X(\Delta_n)}$. Then a number of asymptotic results for the convergence of the realised power variation have been proven in the non-semi-martingale case.

\subsection{Infeasible weak law}
The first asymptotic result that has been established is a form of a weak law of large numbers for the suitably scaled realised power variation \cite{corcuera2013asymptotic}.  

\begin{theorem}
	\label{thm:infeasible_weak_law}
For any $p >0$, and fixed $t>0$, 

$$\Delta_n \tau_n^{-p} V(Y,p;\Delta_n)(t) \xrightarrow{u.c.p.} V(Y,p)(t) := m_p \int_0^t |\sigma_s|^p \ \mathrm{d}s,$$
as $\Delta_n \rightarrow 0$, and where $m_p := \mathbb{E}[|U|^p]$ for a standard normal random variable $U \sim N(0,1).$ The convergence is uniform on compacts in probability. 
\end{theorem}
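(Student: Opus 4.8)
The plan is to follow the standard blocking/conditioning strategy for power variations of Gaussian-type processes, adapting it to the $\mathcal{BSS}$ setting where the scaling $\tau_n = \sqrt{R(\Delta_n)}$ plays the role of $\sqrt{\Delta_n}$ in the semimartingale case. First I would observe that the increment $\Delta_i^n Y$ can be decomposed into a "local" Gaussian-core part and a remainder. More precisely, write $\Delta_i^n Y = \sigma_{(i-1)\Delta_n}\,\Delta_i^n X + r_i^n$, where $\Delta_i^n X$ is the increment of the Gaussian core and $r_i^n$ collects the error from freezing the volatility at the left endpoint of the interval together with the contribution of the part of the stochastic integral that depends on $\sigma$ far in the past (controlled via kernel conditions (ii)–(iii)). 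The key scaling fact is that $\mathbb{E}[(\Delta_i^n X)^2] = R(\Delta_n) = \tau_n^2$ by stationarity of $X$, so $\tau_n^{-1}\Delta_i^n X$ is a standard normal variable, and conditions (i), (iv)–(vi) guarantee that the normalized increments $\tau_n^{-1}\Delta_i^n X$ and $\tau_n^{-1}\Delta_j^n X$ become asymptotically independent as $|i-j|$ grows, with a summable decay of correlations.

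Next I would handle the main (frozen-volatility) term $\Delta_n \tau_n^{-p}\sum_{i=1}^{\lfloor t/\Delta_n\rfloor} |\sigma_{(i-1)\Delta_n}|^p |\tau_n^{-1}\Delta_i^n X|^p$. Conditionally on the volatility path (using the fact that $\sigma$ and $W$ can be decoupled here because $\sigma$ is adapted and $W$ drives $X$), this is a Riemann-sum-type object: by the law of large numbers for the stationary Gaussian sequence $\{\tau_n^{-1}\Delta_i^n X\}$ (whose $p$-th absolute moments converge to $m_p$), one expects $\Delta_n \sum_i |\sigma_{(i-1)\Delta_n}|^p |\tau_n^{-1}\Delta_i^n X|^p \to m_p\int_0^t|\sigma_s|^p\,ds$. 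Making this rigorous requires (a) an ergodic-type / $L^1$ argument for the Gaussian functional — controlling the variance of $\sum_i (|\tau_n^{-1}\Delta_i^n X|^p - m_p)$ using the covariance decay, which is where conditions (v)–(vi) on $R''$ enter — and (b) continuity in $s\mapsto|\sigma_s|^p$ (càdlàg) plus dominated convergence to pass from the Riemann sum over frozen values to the integral. For the u.c.p.\ (uniform on compacts in probability) strengthening, I would note the limit process $t\mapsto V(Y,p)(t)$ is continuous and increasing and the approximating processes are monotone in the natural sense, so pointwise convergence upgrades to uniform convergence on $[0,t]$ via a Dini-type argument.

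The remaining step is to show the remainder term is asymptotically negligible: $\Delta_n\tau_n^{-p}\sum_i \big(|\Delta_i^nY|^p - |\sigma_{(i-1)\Delta_n}\Delta_i^nX|^p\big)\xrightarrow{P}0$ uniformly on compacts. Using the elementary inequality $\big||a|^p-|b|^p\big|\le p(|a|\vee|b|)^{p-1}|a-b|$ for $p\ge1$ (and the subadditivity $||a|^p-|b|^p|\le|a-b|^p$ for $p\le1$), this reduces to bounding moments of $r_i^n/\tau_n$; here one splits the error into the "volatility does not vary much over an interval of length $\Delta_n$" piece (handled by càdlàg-ness of $\sigma$ and, for the fluctuation near jumps, by the integrability of $g$ against $\sigma$ off a neighbourhood of $0$) and the "tail of the kernel" piece (handled by $g'\in L^2(\epsilon,\infty)$ in condition (ii) and condition (iii)). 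I expect this remainder bound — specifically, showing that the error from freezing $\sigma$ is of smaller order than $\tau_n^p\Delta_n^{-1}$ uniformly, using only the regular variation/structure-function hypotheses — to be the main obstacle, since it is exactly where the non-semimartingale scaling $\tau_n$ (rather than $\sqrt{\Delta_n}$) must be tracked carefully and where the interplay between the local behaviour of $g$ near $0$ (exponent $\alpha$) and the decay of $g,g'$ at infinity is essential. This is, in effect, the content of the original argument in \cite{corcuera2013asymptotic}, and I would cite and adapt their estimates rather than reprove them from scratch.
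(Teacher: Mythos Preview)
The paper does not actually prove Theorem~\ref{thm:infeasible_weak_law}: it is stated as a known result and attributed to \cite{corcuera2013asymptotic}, with no argument given beyond the citation. There is therefore nothing in the present paper to compare your proposal against.

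That said, your sketch is broadly faithful to the strategy used in the Barndorff-Nielsen--Corcuera--Podolskij line of work you cite at the end: freeze the volatility to reduce to a functional of the normalized Gaussian-core increments $\tau_n^{-1}\Delta_i^nX$, invoke an ergodic/LLN argument for that stationary Gaussian sequence using the covariance decay encoded in the structure-function conditions, and then show the freezing remainder is negligible at the $\tau_n$ scale. One caution: your ``decoupling'' remark (that $\sigma$ and $W$ can be treated as independent because $\sigma$ is adapted) is not quite right in general, since the infeasible weak law is stated without any independence assumption on $\sigma$ and $W$; the actual proofs in \cite{corcuera2013asymptotic} handle this by a more careful conditional/blocking argument rather than by literal decoupling. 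Since you already plan to cite and adapt those estimates rather than redo them, this is a minor point, but you should not present independence of $\sigma$ and $W$ as part of the argument.
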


We note that the above theorem shows that $m_p^{-1} \Delta_n \tau_n^{-p} V(Y,p;\Delta_n)(t)$ is a consistent estimator for the stochastic quantity $\int_0^t |\sigma_s|^p \ \mathrm{d}s$. The law is infeasible in the sense that it requires knowledge of the scale factor $\tau_n$, which in turn depends on knowing the kernel function $g$. 

\subsection{Feasible weak law}
In the case where $g$ is unknown, the scaling factor $\tau_n$ must be estimated from the observations $Y_{i\Delta_n}, i = 0, 1, ..., \lfloor T/\Delta_n \rfloor$. Then the following proposition enables us to `plug in' an estimator of $\tau_n$ and establish a feasible weak law for the realised power variation. 

\begin{proposition}
	\label{prop:feasible_weak_law}
Suppose that for any fixed $n$, we may construct an estimator $\hat{\tau}_n^N$ based on $N \in \mathbb{N}$ observations of $Y$ at frequency $\Delta_n$, which is consistent estimator for $\tau_n$. That is, suppose that for fixed $n \in \mathbb{N}$ we have

\begin{equation}
	\label{eq:long_span_consistency}
\hat{\tau}_n^N \xrightarrow{\mathbb{P}} \tau_n \qquad \text{as } N \rightarrow \infty. 	
\end{equation}

Then

$$\plim_{n \rightarrow \infty} \plim_{N \rightarrow \infty} \Delta_n (\hat{\tau}^N_n)^{-p} V(Y,p;\Delta_n)(t) = V(Y,p)(t) = m_p \int_0^t |\sigma_s|^p \ \mathrm{d}s,$$
where $\plim$ denotes a limit in probability. 
\end{proposition}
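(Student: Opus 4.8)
The plan is to split the long-span estimation error in $\hat{\tau}_n^N$ from the infill (small-$\Delta_n$) asymptotics, so that the statement follows by combining a Slutsky argument in $N$ for fixed $n$ with the infeasible weak law, Theorem~\ref{thm:infeasible_weak_law}, in $n$. First I would fix $n \in \mathbb{N}$ and note that $\tau_n = \sqrt{R(\Delta_n)} > 0$, since the variogram of the Gaussian core is strictly positive at positive lags, so the factor $\tau_n^{p}$ may be freely inserted and cancelled. Writing
\[
\Delta_n (\hat{\tau}_n^N)^{-p} V(Y,p;\Delta_n)(t)
= \left( \frac{\tau_n}{\hat{\tau}_n^N} \right)^{p} \, \Delta_n \tau_n^{-p} V(Y,p;\Delta_n)(t),
\]
the second factor on the right does not depend on $N$, while the first factor involves only the long-span estimator.

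Next I would let $N \to \infty$ with $n$ still fixed. By hypothesis~\eqref{eq:long_span_consistency}, $\hat{\tau}_n^N \xrightarrow{\mathbb{P}} \tau_n$, and since $x \mapsto (\tau_n/x)^{p}$ is continuous at $x = \tau_n > 0$, the continuous mapping theorem gives $(\tau_n/\hat{\tau}_n^N)^{p} \xrightarrow{\mathbb{P}} 1$ (working on the event $\{\hat{\tau}_n^N > 0\}$, whose probability tends to one, so the ratio is eventually well defined). Multiplying a sequence that converges in probability to the constant $1$ by the $N$-free random variable $\Delta_n \tau_n^{-p} V(Y,p;\Delta_n)(t)$, Slutsky's theorem yields
\[
\plim_{N \to \infty} \Delta_n (\hat{\tau}_n^N)^{-p} V(Y,p;\Delta_n)(t) = \Delta_n \tau_n^{-p} V(Y,p;\Delta_n)(t).
\]
It then remains to take $\plim_{n \to \infty}$ of the right-hand side; but that is precisely the scaled realised power variation of the infeasible weak law, and Theorem~\ref{thm:infeasible_weak_law} provides u.c.p.\ convergence --- hence, a fortiori, convergence in probability at the fixed time $t$ --- to $V(Y,p)(t) = m_p \int_0^t |\sigma_s|^p \, \mathrm{d}s$. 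Chaining the two limits gives the claim.

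The argument is largely bookkeeping of iterated limits and I do not expect a serious obstacle; the points needing genuine care are (i) that the inner limit collapses \emph{exactly} onto the infeasible statistic, with no residual randomness from the estimation of $\tau_n$ surviving --- which is what the $N$-independence of the second factor together with Slutsky delivers --- and (ii) the measurability and positivity of the ratio $\tau_n / \hat{\tau}_n^N$, which is why the estimator's consistency is exploited through the continuous mapping theorem on the event where $\hat{\tau}_n^N$ is positive. It is also worth stating explicitly why the conclusion is phrased as an iterated $\plim$ rather than a joint limit or a limit along a diagonal $N = N(n)$: no rate or uniformity in $n$ is assumed in~\eqref{eq:long_span_consistency}, so a diagonal extraction is not available at this level of generality, and the iterated statement is the sharp one obtainable here.
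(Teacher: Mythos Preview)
Your proof is correct and follows essentially the same approach as the paper: factor the feasible statistic as the infeasible statistic times the ratio $(\tau_n/\hat{\tau}_n^N)^p$, use the continuous mapping theorem (Slutsky) to take the inner limit in $N$, and then apply Theorem~\ref{thm:infeasible_weak_law} for the outer limit in $n$. Your write-up is in fact more careful than the paper's, making explicit the positivity of $\tau_n$, the well-definedness of the ratio, and the reason the conclusion is stated as an iterated rather than joint limit.
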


\begin{proof}
For notational convenience, write $\hat{\tau}_n = \hat{\tau}_n^N$, and then we may write 

$$ \Delta_n (\hat{\tau}_n)^{-p} V(Y,p;\Delta_n)(t) = \Delta_n \tau_n^{-p} V(Y,p;\Delta_n)(t) \frac{\hat{\tau}_n^{-p}}{\tau_n^{-p}}.$$

Then, since $\hat{\tau}_n^{-p} / \tau_n^{-p} \rightarrow 1$ in probability as $N \rightarrow \infty,$ by using Theorem \ref{thm:infeasible_weak_law} and applying the continuous mapping theorem, the product of these terms will also converge to $V(Y,p)(t)$ by first taking the limit as $N \rightarrow \infty$, and then the limit as $n \rightarrow \infty$.
\end{proof}

\subsection{Estimation of the scale factor}
The above feasible weak law tells us that we can replace the true scale factor $\tau_n$ with a consistent estimator, and the same asymptotic properties of the realised power variation will hold. We may construct a non-parametric estimator of the scale factor as follows. Consider an estimator for $\tau_n$, based on $N$ observations, of the form

$$\tau_n^N := \sqrt{\frac{1}{N} \sum_{i=1}^{N} (\Delta_i^n X)^2}.$$

We see that $\tau_n^N$ depends only on observations of the Gaussian core. Then we may show that $\tau_n^N$ converges in probability to $\tau_n$, as $N \rightarrow \infty.$ In fact, we can prove a stronger result, a limit theorem for this non-parametric estimator, and then the convergence in probability follows as a consequence. Let $\hat{\gamma}(\cdot ) := \hat{\gamma}^N(\cdot)$ be the sample autocovariance of the Gaussian core based on $N \in \mathbb{N}$ observations, defined by

$$\hat{\gamma}(h\Delta_n ) = \frac{1}{N} \sum_{i=1}^{N} X_{i\Delta_n} X_{(i + h)\Delta_n}.$$

Now we use a result for a more general class of processes, a L\'{e}vy driven continuous time moving average process of the form

$$X_t = \int_{-\infty}^t g(t-s) \ \mathrm{d}L_s$$
where $L$ is a two-sided L\'{e}vy process, so that, in particular the Gaussian core of a $\mathcal{BSS}$ process is a subset of this class of processes. For the core of the process, the following result has been established \cite{cohen2013central} regarding the limiting distribution of the sample autocovariance of this class of processes. 

\begin{theorem}
Fix $\Delta_n$ and let $X$ be a L\'{e}vy driven continuous time moving average process, and suppose that 

$$ \sum_{k=-\infty}^{\infty} \left( \int_{-\infty}^{\infty} |g(s)g(s + k\Delta_n)| \ \mathrm{d}s \right)^2 < \infty.$$

Then we have for each $h \in \mathbb{N}$
 
$$ \sqrt{N} \left( \hat{\gamma}(0) - \gamma(0), ...,  \hat{\gamma}(h\Delta_n) - \gamma(h\Delta_n) \right) \xrightarrow{d} N(0, V) $$
as $N \rightarrow \infty$, where $V = (V_{pq}) \in \mathbb{R}^{h+1 \times h+1}$ is the covariance matrix defined for $p,q = 0, ..., h$ by 

$$V_{pq} = (\mathbb{E}[L_1^4] - 3(\mathbb{E}[L_1^2])^2) \int_0^{\Delta_n} \bar{g}_p(x) \bar{g}_q(x) \ \mathrm{d}x \ + \sum_{k = -\infty}^\infty [ \gamma(k\Delta_n)\gamma((k-p+q)\Delta_n) + \gamma((k+q)\Delta_n)\gamma((k-p)\Delta_n)]$$
and where the function $\bar{g}_q$ is defined as 

$$\bar{g}_q(x) = \sum_{k = -\infty}^\infty g(x + k\Delta_n) g( x + (k + q)\Delta_n).$$
\end{theorem}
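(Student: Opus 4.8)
This is a central limit theorem for sample autocovariances of a linear process, so the plan is to (i) represent the sampled sequence $(X_{i\Delta_n})_{i\in\mathbb Z}$ as a moving average in an i.i.d.\ sequence of block innovations, (ii) approximate it by a finitely dependent process by truncating the kernel, (iii) apply the classical CLT for $m$-dependent stationary sequences together with the Cram\'er--Wold device, and (iv) send the truncation level to infinity, controlling the error with the summability hypothesis. For the first step, after recentring $L$ so that $\mathbb E[L_1]=0$ (which leaves all autocovariances unchanged and is the relevant case here, since then $\mathbb E[X_0]=0$), write $X_{i\Delta_n}=\int_0^\infty g(u)\,\mathrm dL_{i\Delta_n-u}$ and split $[0,\infty)=\bigcup_{k\ge 0}[k\Delta_n,(k+1)\Delta_n)$. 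Letting $\Lambda_m$ be the restriction of the random measure $\mathrm dL$ to $[(m-1)\Delta_n,m\Delta_n)$ --- these are i.i.d.\ by stationarity and independence of increments of $L$ --- one obtains $X_{i\Delta_n}=\sum_{k\ge 0}\phi_k(\Lambda_{i-k})$ with $\phi_k(\Lambda_m):=\int_0^{\Delta_n} g(k\Delta_n+v)\,\mathrm d\Lambda_m(v)$, so that $(X_{i\Delta_n})_i$ is a linear process in the i.i.d.\ innovations $(\Lambda_m)$. Hence $X_{i\Delta_n}X_{(i+h)\Delta_n}-\gamma(h\Delta_n)$ is stationary and centred, and by Cram\'er--Wold it suffices to prove asymptotic normality of $N^{-1/2}\sum_{i=1}^N\sum_{p=0}^h\lambda_p\bigl(X_{i\Delta_n}X_{(i+p)\Delta_n}-\gamma(p\Delta_n)\bigr)$ for arbitrary $\lambda_0,\dots,\lambda_h$.

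Next, for $M\in\mathbb N$ put $X^{(M)}_{i\Delta_n}:=\sum_{k=0}^M\phi_k(\Lambda_{i-k})$, the linear process with kernel truncated to $[0,M\Delta_n]$, with sample and true autocovariances $\hat\gamma^{(M)}$, $\gamma^{(M)}$. For fixed $M$ the sequence $\bigl(X^{(M)}_{i\Delta_n}X^{(M)}_{(i+h)\Delta_n}\bigr)_i$ depends only on the blocks $\Lambda_{i-M-h},\dots,\Lambda_i$, hence is $(M+h)$-dependent and stationary with finite fourth moment (finiteness of $\mathbb E[L_1^4]$ being implicit in the statement). The multivariate Hoeffding--Robbins CLT for $m$-dependent sequences then gives $\sqrt N\bigl(\hat\gamma^{(M)}(0)-\gamma^{(M)}(0),\dots,\hat\gamma^{(M)}(h\Delta_n)-\gamma^{(M)}(h\Delta_n)\bigr)\xrightarrow{d}N(0,V^{(M)})$ with $V^{(M)}_{pq}=\sum_{k\in\mathbb Z}\Cov\bigl(X^{(M)}_0X^{(M)}_{p\Delta_n},X^{(M)}_{k\Delta_n}X^{(M)}_{(k+q)\Delta_n}\bigr)$, a finite sum.

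To identify the limit, expand this fourth-order covariance through the cumulant identity $\mathbb E[ABCD]=\sum(\text{pair products of covariances})+\mathrm{cum}(A,B,C,D)$, where the singleton--triple terms vanish because $\mathbb E[X]=0$. The pair-product part reproduces the Bartlett-type series $\sum_k[\gamma(k\Delta_n)\gamma((k-p+q)\Delta_n)+\gamma((k+q)\Delta_n)\gamma((k-p)\Delta_n)]$, while the joint fourth cumulant, using the exponential form of the characteristic functional of $L$ to get $\mathrm{cum}\bigl(\int f_1\,\mathrm dL,\dots,\int f_4\,\mathrm dL\bigr)=(\mathbb E[L_1^4]-3(\mathbb E[L_1^2])^2)\int f_1f_2f_3f_4$, and reassembling the block sums via $\bar g_q(x)=\sum_k g(x+k\Delta_n)g(x+(k+q)\Delta_n)$, collapses to $(\mathbb E[L_1^4]-3(\mathbb E[L_1^2])^2)\int_0^{\Delta_n}\bar g_p(x)\bar g_q(x)\,\mathrm dx$. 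Letting $M\to\infty$, dominated convergence then yields $V^{(M)}\to V$.

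Finally, removing the truncation requires showing $\lim_{M\to\infty}\limsup_{N\to\infty}\Var\bigl(N^{-1/2}\sum_{i=1}^N[X_{i\Delta_n}X_{(i+h)\Delta_n}-X^{(M)}_{i\Delta_n}X^{(M)}_{(i+h)\Delta_n}]\bigr)=0$ for each $h$, after which the standard approximation lemma for weak convergence of triangular arrays finishes the argument. This $L^2$ error estimate, and the absolute convergence of all the series above, is exactly where the hypothesis $\sum_k\bigl(\int|g(s)g(s+k\Delta_n)|\,\mathrm ds\bigr)^2<\infty$ is consumed. I expect this error bound, together with the fourth-moment bookkeeping of the previous paragraph --- cleanly separating the genuine non-Gaussian fourth-cumulant contribution from the covariance-product contributions and checking that the kernel condition makes everything summable uniformly in $N$ --- to be the main technical obstacle; the $m$-dependent CLT and the Cram\'er--Wold reduction are routine once the moving-average representation is in place.
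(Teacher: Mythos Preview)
The paper does not actually prove this theorem; it is quoted as an established result from the literature (Cohen and Lindner, \cite{cohen2013central}) and used as a black box to derive Proposition~\ref{prop:clt_scale_factor}. So there is no ``paper's own proof'' to compare against.

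That said, your outline is correct and is essentially the argument one finds in that reference: represent the discretely sampled process as a linear process in i.i.d.\ block innovations $\Lambda_m$ coming from the restriction of $\mathrm dL$ to intervals of length $\Delta_n$; truncate the kernel to reduce to an $m$-dependent stationary sequence; apply the $m$-dependent CLT together with Cram\'er--Wold; identify the asymptotic covariance by the fourth-moment/cumulant decomposition, which separates the Bartlett pair-product sum from the genuine fourth-cumulant term $(\mathbb E[L_1^4]-3(\mathbb E[L_1^2])^2)\int_0^{\Delta_n}\bar g_p\bar g_q$; and finally remove the truncation via the standard $L^2$ approximation lemma, using the square-summability hypothesis on $\int|g(s)g(s+k\Delta_n)|\,\mathrm ds$ to control the tail. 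Your identification of the fourth-cumulant formula $\mathrm{cum}\bigl(\int f_1\,\mathrm dL,\dots,\int f_4\,\mathrm dL\bigr)=\kappa_4\int f_1f_2f_3f_4$ via the L\'evy--Khintchine exponent is exactly right, as is the observation that $\mathbb E[L_1^4]<\infty$ must be assumed implicitly. The only point on which you are perhaps too brief is the uniform-in-$N$ domination needed to pass to the limit $M\to\infty$ in the variance of the truncation error; in the reference this is handled by bounding the relevant series by tails of $\sum_k\bigl(\int|g(s)g(s+k\Delta_n)|\,\mathrm ds\bigr)^2$, which is precisely what the hypothesis supplies.
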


An immediate corollary of this result is that when $X$ is the Gaussian core of a $\mathcal{BSS}$ process then we can disregard the first term in the expression for $V_{pq}$ since the coefficient is identically zero, and thus have

$$V_{pq} = \sum_{k = -\infty}^\infty [ \gamma(k\Delta_n)\gamma((k-p+q)\Delta_n) + \gamma((k+q)\Delta_n)\gamma((k-p)\Delta_n)].$$

In particular, focussing on the joint distribution of the sample variance and first lag autocovariance, it holds that $\sqrt{N}(\hat{\gamma}(0) - \gamma(0), \hat{\gamma}(\Delta_n) -\gamma(\Delta_n))$, is asymptotically normal with mean $0$, and covariance matrix $V \in \mathbb{R}^{2\times2}$ given by

\begin{align*}
\begin{split}
V_{00} &= \sum_{k = -\infty}^{\infty} 2 \gamma(k\Delta_n)^2, \\
V_{10} = V_{01} &=  \sum_{k = -\infty}^{\infty} 2 \gamma(k\Delta_n)\gamma((k-1)\Delta_n), \\
V_{11} &= \sum_{k = -\infty}^{\infty} [\gamma(k\Delta_n)^2 + \gamma((k+1)\Delta_n)\gamma((k-1)\Delta_n)].\\
\end{split}
\end{align*} 

Using the above results, we can determine the following limit theorem for the asymptotic distribution of $\tau_n^N$.

\begin{proposition}
	\label{prop:clt_scale_factor}
Let $Y$ be a $\mathcal{BSS}$ process and let $X$ be the Gaussian core of $Y$. Let $\tau_n^N$ be defined as above. Then for fixed $\Delta_n$, 

$$ \sqrt{N} \left( \tau_n^N - \tau_n \right) \xrightarrow{d} N\left(0,\frac{v}{4\tau_n^2}\right)$$
where the quantity $v$ is given by $v = 4V_{00} - 8V_{01} + 4V_{11}$ with $V_{00},V_{01}$ and $V_{11}$ as defined above.
\end{proposition}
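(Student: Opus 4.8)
The plan is to reduce the claim to the joint central limit theorem for $(\hat{\gamma}(0),\hat{\gamma}(\Delta_n))$ recorded above and then apply the delta method, so that the real work is an algebraic identity together with a gradient computation.

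First I would re-express the squared estimator in terms of sample autocovariances. Expanding the increment,
$$(\tau_n^N)^2 = \frac{1}{N}\sum_{i=1}^N (\Delta_i^n X)^2 = \frac{1}{N}\sum_{i=1}^N X_{i\Delta_n}^2 \;-\; \frac{2}{N}\sum_{i=1}^N X_{i\Delta_n}X_{(i-1)\Delta_n} \;+\; \frac{1}{N}\sum_{i=1}^N X_{(i-1)\Delta_n}^2,$$
and re-indexing the second and third sums to match the definitions of $\hat{\gamma}(0)$ and $\hat{\gamma}(\Delta_n)$ gives
$$(\tau_n^N)^2 = 2\hat{\gamma}(0) - 2\hat{\gamma}(\Delta_n) + \frac{1}{N}r_N,$$
where $r_N$ is a fixed linear combination of endpoint terms of the form $X_0^2$, $X_{N\Delta_n}^2$, $X_0 X_{\Delta_n}$, $X_{N\Delta_n}X_{(N+1)\Delta_n}$. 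Since $X$ is stationary with finite second moment, $r_N = O_{\mathbb{P}}(1)$, hence $\sqrt{N}\,N^{-1}r_N = o_{\mathbb{P}}(1)$ and this correction is negligible at the $\sqrt{N}$ scale. Recalling that $\tau_n^2 = R(\Delta_n) = 2\gamma(0) - 2\gamma(\Delta_n)$, we see that $(\tau_n^N)^2 - \tau_n^2$ equals, up to $o_{\mathbb{P}}(N^{-1/2})$, the linear functional with coefficients $(2,-2)$ applied to $\big(\hat{\gamma}(0)-\gamma(0),\,\hat{\gamma}(\Delta_n)-\gamma(\Delta_n)\big)$.

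Next I would invoke the corollary recorded above: for fixed $\Delta_n$, under the summability condition of that theorem (which holds, for instance, for the gamma kernel), $\sqrt{N}\big(\hat{\gamma}(0)-\gamma(0),\,\hat{\gamma}(\Delta_n)-\gamma(\Delta_n)\big) \xrightarrow{d} N(0,V)$ with $V$ the $2\times 2$ matrix having entries $V_{00},V_{01},V_{11}$. Combining this with the previous display and Slutsky's lemma,
$$\sqrt{N}\big((\tau_n^N)^2 - \tau_n^2\big) \xrightarrow{d} N\big(0,\,(2,-2)\,V\,(2,-2)^{\top}\big) = N\big(0,\,4V_{00} - 8V_{01} + 4V_{11}\big) = N(0,v).$$
Finally, applying the delta method to $x\mapsto\sqrt{x}$, which is continuously differentiable near $\tau_n^2>0$ with derivative $1/(2\tau_n)$ there, yields
$$\sqrt{N}\big(\tau_n^N - \tau_n\big) \xrightarrow{d} N\Big(0,\,\tfrac{1}{(2\tau_n)^2}\,v\Big) = N\Big(0,\,\tfrac{v}{4\tau_n^2}\Big),$$
which is the assertion.

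The potential obstacles are modest: one must confirm that the summability hypothesis of the cited autocovariance CLT is implied by (or at least compatible with) the standing kernel assumptions, and that $\tau_n>0$, which is needed both for differentiability of the square root at $\tau_n^2$ and for the limiting covariance to be correctly identified. The endpoint bookkeeping in $r_N$ and the delta-method gradient are routine.
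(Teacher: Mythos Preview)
Your proof is correct and follows essentially the same route as the paper: express $(\tau_n^N)^2$ as $2\hat{\gamma}(0)-2\hat{\gamma}(\Delta_n)$ plus an $O_{\mathbb{P}}(1/N)$ endpoint correction, invoke the joint CLT for $(\hat{\gamma}(0),\hat{\gamma}(\Delta_n))$ together with Slutsky to get $\sqrt{N}\big((\tau_n^N)^2-\tau_n^2\big)\xrightarrow{d}N(0,v)$, and then apply the delta method with $f(\theta)=\sqrt{\theta}$. Your bookkeeping of the endpoint terms is in fact slightly more careful than the paper's (you correctly note that cross terms $X_0X_{\Delta_n}$, $X_{N\Delta_n}X_{(N+1)\Delta_n}$ may appear depending on how $\hat{\gamma}(\Delta_n)$ is indexed), but this makes no difference to the argument.
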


An immediate consequence of this result is that $\tau_n^N$ converges in probability to $\tau_n$, since $\tau_n^N - \tau_n \xrightarrow{d} 0$ and convergence in distribution to a constant implies convergence in probability to that constant. $\tau_n^N$ thus exhibits long-span consistency in the sense of Equation \ref{eq:long_span_consistency}, however, in all non-trivial cases, we are not able to directly observe the Gaussian core $X$, and only have access to the observations of the $\mathcal{BSS}$ process $Y$. Thus $\tau_n^N$ is not truly feasible. 

To address this issue, we define the analogous scale factor based on the second order structure function for $Y$, 

$$\tau_n^Y := \sqrt{\mathbb{E}[|\Delta_1^n Y|^2]}$$
and we may define the corresponding empirical estimator for $\tau_n^Y$, based on $N$ observations of $Y$ at frequency $\Delta_n$ as 

$$\tau_n^{Y,N} := \sqrt{\frac{1}{N} \sum_{i=1}^{N} (\Delta_i^n Y)^2}.$$

Then we may establish the following weak law for the convergence of $\tau_n^{Y,N}.$

\begin{proposition}
	\label{prop:feasible_scale_factor_convergence}
Assume that $\sigma_{s}$ is independent of $W_{s}$, that $\sup_{s\in\mathbb{R}}\mathbb{E}[\sigma_{s}^{4}]<A_{1}$, that $\mathbb{E}[\sigma_{s}^{2}]$ is constant and smaller than $A_{2}$, that $Cov(\sigma_{s}^{2},\sigma_{r}^{2})\leq A_{3}c^{-A_{4}|s-r|}$, where $c>1$, for every $s,r\in\mathbb{R}$, and that $g(x)<A_{5}e^{-A_{6}x}$ for $x>K$, where $K\in\mathbb{R}_{+}$. Further, assume that $A_{6}>\frac{1}{2}A_{4}$. (in other words that $g$ decays sufficiently fast), that $\|g\|^2_{L^{2}}<A_{7}$ and that $\mathbb{E}(\Delta_1^{n} Y)^{2}\geq \tilde{A}_{n}$. Let
	\begin{equation*}
	D_{n}:=(2\wedge A_{4})\Delta_{n}(i-j),
	\end{equation*}
	\begin{equation*}
	C_{n}:=4A_{7}\left(A_{1}e^{2\Delta_{n}}\left(1+e^{2K}A_{7} \right) +\frac{2A_{5}A_{3}}{2A_{6}-A_{4}}\right), \quad \text{and}
	\end{equation*}
	\begin{equation*}
	E_{n}:=\frac{3\sqrt{C_{n}}}{\tilde{A}_{n}(1-e^{-D_{n}})}
	\end{equation*}
	Then for fixed $n$, we have
	\begin{align*}
	\tau_n^{Y,N}\xrightarrow{\mathbb{P}} \tau_n^Y,
	\end{align*}
	as $N\to \infty$, with $N\geq E_{n}^{2}n^{1+2\delta}$ for some $\delta > 0$, where $E_{n}<\frac{C'}{1-e^{-\Delta_{n}(2\wedge A_{4})}}$ and $C'$ is an explicit constant independent of $n$.
\end{proposition}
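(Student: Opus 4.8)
The plan is to transfer everything to the squared quantities: I will prove that $(\tau_n^{Y,N})^2=\frac1N\sum_{i=1}^N(\Delta_i^nY)^2$ converges in probability to $(\tau_n^Y)^2=\mathbb{E}[(\Delta_1^nY)^2]$ and then invoke the continuous mapping theorem for $x\mapsto\sqrt x$, which is legitimate since $(\tau_n^Y)^2\ge\tilde A_n>0$. Because $\sigma$ is stationary and independent of $W$, the process $Y$, and hence the increment sequence $(\Delta_i^nY)_{i\ge1}$, is stationary for each fixed $n$; in particular $\mathbb{E}[(\Delta_i^nY)^2]=(\tau_n^Y)^2$ for every $i$, so $(\tau_n^{Y,N})^2$ is unbiased, and it suffices to control
\[
\Var\bigl((\tau_n^{Y,N})^2\bigr)=\frac1{N^2}\sum_{i,j=1}^N\Cov\bigl((\Delta_i^nY)^2,(\Delta_j^nY)^2\bigr)
\]
and then apply Chebyshev's inequality.

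The core of the argument is the decay estimate
\[
\bigl|\Cov\bigl((\Delta_i^nY)^2,(\Delta_j^nY)^2\bigr)\bigr|\le C_n\,e^{-(2\wedge A_4)\Delta_n|i-j|},
\]
which produces the constants $C_n$ and $D_n$. Writing $\Delta_i^nY=\int_{\mathbb{R}}g_i^n(s)\sigma_s\,\mathrm{d}W_s$ with $g_i^n(s):=g(i\Delta_n-s)-g((i-1)\Delta_n-s)\mathbf{1}_{\{s\le(i-1)\Delta_n\}}$ (so $\|g_i^n\|_{L^2}\le2\|g\|_{L^2}$), the independence of $\sigma$ and $W$ makes $(\Delta_i^nY,\Delta_j^nY)$ centred Gaussian conditionally on $\sigma$, with covariance $c^\sigma_{ij}:=\int_{\mathbb{R}}g_i^n(s)g_j^n(s)\sigma_s^2\,\mathrm{d}s$. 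Isserlis' (Wick's) formula gives $\Cov((\Delta_i^nY)^2,(\Delta_j^nY)^2\mid\sigma)=2(c^\sigma_{ij})^2$, so by the law of total covariance
\[
\Cov\bigl((\Delta_i^nY)^2,(\Delta_j^nY)^2\bigr)=2\,\mathbb{E}\bigl[(c^\sigma_{ij})^2\bigr]+\Cov\bigl(c^\sigma_{ii},c^\sigma_{jj}\bigr).
\]
For the first term I would bound $\mathbb{E}[(c^\sigma_{ij})^2]\le A_1\bigl(\int_{\mathbb{R}}|g_i^n(s)g_j^n(s)|\,\mathrm{d}s\bigr)^2$ by Cauchy--Schwarz in $\sigma$ (using $\sup_s\mathbb{E}[\sigma_s^4]<A_1$), then split the $s$-integral into the bounded region where $g$ is singular at the origin -- handled by $\|g\|_{L^2}^2<A_7$, which together with shifting arguments by at most $\Delta_n$ and the region $x\le K$ produces the $e^{2\Delta_n}$ and $e^{2K}A_7$ factors -- and the tails, where $g(x)<A_5e^{-A_6x}$; since the supports of $g_i^n$ and $g_j^n$ are offset by $|i-j|\Delta_n$, this gives the required exponential decay in $|i-j|$ and the $A_1e^{2\Delta_n}(1+e^{2K}A_7)$ part of $C_n$. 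For the second term I would use $|\Cov(\sigma_s^2,\sigma_r^2)|\le A_3c^{-A_4|s-r|}$ to get $|\Cov(c^\sigma_{ii},c^\sigma_{jj})|\le A_3\int_{\mathbb{R}}\int_{\mathbb{R}} g_i^n(s)^2g_j^n(r)^2c^{-A_4|s-r|}\,\mathrm{d}s\,\mathrm{d}r$; here the hypothesis $A_6>\tfrac12A_4$ is exactly what makes the double integral over the exponential tails of $g_i^n,g_j^n$ converge, yielding the $\tfrac{2A_5A_3}{2A_6-A_4}$ term, and again decay $e^{-(2\wedge A_4)\Delta_n|i-j|}$, in which the rate combines the $\sigma^2$-memory rate $A_4$ with the tail rate of $g$. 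Carrying out this covariance estimate -- in particular performing the split of the singular part of $g$ against its exponential tails carefully enough that the resulting constant has the stated form uniformly in $n$ -- is the step I expect to be the main obstacle.

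Granting the covariance bound, the rest is routine. Summing the geometric series,
\[
\Var\bigl((\tau_n^{Y,N})^2\bigr)\le\frac{C_n}{N}\sum_{k\in\mathbb{Z}}e^{-(2\wedge A_4)\Delta_n|k|}\le\frac{2C_n}{N\,(1-e^{-(2\wedge A_4)\Delta_n})},
\]
so, using $|\sqrt a-\sqrt b|\le|a-b|/\sqrt b$ with $(\tau_n^Y)^2\ge\tilde A_n$, Chebyshev's inequality gives for every $\eta>0$
\[
\mathbb{P}\bigl(|\tau_n^{Y,N}-\tau_n^Y|>\eta\bigr)\le\mathbb{P}\Bigl(\bigl|(\tau_n^{Y,N})^2-(\tau_n^Y)^2\bigr|>\eta\sqrt{\tilde A_n}\Bigr)\le\frac{2C_n}{\eta^2\,\tilde A_n\,N\,(1-e^{-(2\wedge A_4)\Delta_n})}.
\]
For fixed $n$ the right-hand side tends to $0$ as $N\to\infty$, which is the asserted convergence in probability; moreover, since $E_n^2=9C_n/\bigl(\tilde A_n^2(1-e^{-(2\wedge A_4)\Delta_n})^2\bigr)$ dominates $2C_n/\bigl(\eta^2\tilde A_n(1-e^{-(2\wedge A_4)\Delta_n})\bigr)$ for $n$ large (because $\tilde A_n\to0$ and $1-e^{-(2\wedge A_4)\Delta_n}\to0$), the choice $N\ge E_n^2n^{1+2\delta}$ makes the bound $O(n^{-(1+2\delta)})$ -- the uniform-in-$n$ rate needed to run the double-limit argument behind Proposition~\ref{prop:feasible_weak_law}. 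Finally, $C_n$ remains bounded as $\Delta_n\to0$, which delivers $E_n<C'/(1-e^{-(2\wedge A_4)\Delta_n})$ with $C'$ an explicit constant independent of $n$.
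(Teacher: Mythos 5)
Your proposal is correct and follows essentially the same route as the paper: pass to the squared quantities, write the variance of $(\tau_n^{Y,N})^2$ as a double sum of covariances, condition on $\sigma$ and apply Isserlis' theorem to split each covariance into the Gaussian term $2\,\mathbb{E}[(c^\sigma_{ij})^2]$ (bounded via $A_1$, Cauchy--Schwarz and the exponential tail of $g$) and the term $\Cov(c^\sigma_{ii},c^\sigma_{jj})$ (bounded via the mixing of $\sigma^2$ and $A_6>\tfrac12A_4$), then sum the resulting geometric series to obtain $C_n$, $D_n$ and $E_n$. The only cosmetic difference is that you close with Chebyshev and the elementary bound $|\sqrt a-\sqrt b|\le|a-b|/\sqrt b$, whereas the paper controls $\mathbb{E}[|(\tau_n^{Y,N}/\tau_n^Y)^2-1|\wedge 1]$ by the $L^2$ norm of the same deviation; these are equivalent in substance.
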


Since $\tau_n^{Y,N}$ depends only on the observations, we may now establish a truly feasible weak law for the convergence of the realised power variation as follows.

\begin{proposition}
	\label{prop:feasible_weak_law}
Assume that the conditions for the kernel $g$ are met, and let $\tau_n^{Y,N}$ be the non-parametric estimator based on $N$ observations of $Y$. Then 

$$\plim_{n \rightarrow \infty} \plim_{N \rightarrow \infty} \Delta_n (\tau_n^{Y,N})^{-p} V(Y,p;\Delta_n)(t) = \mathbb{E}[\sigma_0^2]^{-p/2}V(Y,p)(t) = m_p \mathbb{E}[\sigma_0^2]^{-p/2}\int_0^t |\sigma_s|^p \ \mathrm{d}s,$$
where again $\plim$ denotes that the limits are taken in probability. 
\end{proposition}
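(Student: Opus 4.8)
The plan is to reduce the statement to the infeasible weak law (Theorem~\ref{thm:infeasible_weak_law}) together with the long-span consistency of $\tau_n^{Y,N}$ (Proposition~\ref{prop:feasible_scale_factor_convergence}), following the same scheme as the abstract feasible weak law; the only genuinely new ingredient is identifying the deterministic limit $\tau_n^Y$ of $\tau_n^{Y,N}$ in terms of $\tau_n$, which accounts for the extra factor $\mathbb{E}[\sigma_0^2]^{-p/2}$ appearing in the limit.

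First I would compute $\tau_n^Y$ explicitly. Since $Y$ is (weakly) stationary and, as recorded above, $\gamma_Y(h) = \mathbb{E}[\sigma_0^2]\gamma_X(h)$, we have
$$(\tau_n^Y)^2 = \mathbb{E}[|\Delta_1^n Y|^2] = 2\gamma_Y(0) - 2\gamma_Y(\Delta_n) = \mathbb{E}[\sigma_0^2]\bigl(2\gamma_X(0) - 2\gamma_X(\Delta_n)\bigr) = \mathbb{E}[\sigma_0^2]\,R(\Delta_n) = \mathbb{E}[\sigma_0^2]\,\tau_n^2,$$
so that $\tau_n^Y = \sqrt{\mathbb{E}[\sigma_0^2]}\,\tau_n$ and, in particular, $(\tau_n^Y)^{-p}/\tau_n^{-p} = \mathbb{E}[\sigma_0^2]^{-p/2}$, a strictly positive constant that does not depend on $n$.

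Next I would decompose, for fixed $n$ and $N$,
$$\Delta_n (\tau_n^{Y,N})^{-p} V(Y,p;\Delta_n)(t) = \Bigl(\Delta_n \tau_n^{-p} V(Y,p;\Delta_n)(t)\Bigr)\cdot\left(\frac{\tau_n}{\tau_n^{Y,N}}\right)^{p}.$$
Holding $n$ fixed and letting $N\to\infty$, Proposition~\ref{prop:feasible_scale_factor_convergence} gives $\tau_n^{Y,N}\xrightarrow{\mathbb{P}}\tau_n^Y>0$, so by the continuous mapping theorem $(\tau_n/\tau_n^{Y,N})^p \xrightarrow{\mathbb{P}} (\tau_n/\tau_n^Y)^p = \mathbb{E}[\sigma_0^2]^{-p/2}$; since the first factor does not depend on $N$, Slutsky's theorem yields
$$\plim_{N\to\infty}\Delta_n (\tau_n^{Y,N})^{-p} V(Y,p;\Delta_n)(t) = \mathbb{E}[\sigma_0^2]^{-p/2}\,\Delta_n \tau_n^{-p} V(Y,p;\Delta_n)(t).$$
Finally, letting $n\to\infty$ and invoking Theorem~\ref{thm:infeasible_weak_law} (in particular convergence in probability at the fixed time $t$), once more with the continuous mapping theorem, gives the claimed limit $\mathbb{E}[\sigma_0^2]^{-p/2}V(Y,p)(t) = m_p\mathbb{E}[\sigma_0^2]^{-p/2}\int_0^t|\sigma_s|^p\,\mathrm{d}s$.

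The argument is almost entirely routine once the earlier results are available; the substantive point — and the one I expect to need the most care — is the identification $\tau_n^Y = \sqrt{\mathbb{E}[\sigma_0^2]}\,\tau_n$, since it is precisely the resulting factor $\mathbb{E}[\sigma_0^2]^{-p/2}$ that shows the feasible, observation-based estimator $\tau_n^{Y,N}$, unlike the infeasible $\tau_n$ or the unobservable core-based $\tau_n^N$, yields a consistent estimator only of a rescaled version of $\int_0^t|\sigma_s|^p\,\mathrm{d}s$. One should also be careful with the order of limits: the constant $\mathbb{E}[\sigma_0^2]^{-p/2}$ must be extracted before the outer $\plim_{n\to\infty}$ is taken, which is legitimate exactly because it is independent of $n$; and since $p>0$ is arbitrary one should note that $x\mapsto x^{-p}$ is continuous at the strictly positive point $\tau_n^Y$, which is where the hypothesis $\mathbb{E}[(\Delta_1^n Y)^2]\ge\tilde A_n>0$ of Proposition~\ref{prop:feasible_scale_factor_convergence} enters implicitly.
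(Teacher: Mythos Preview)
Your proposal is correct and follows essentially the same route as the paper's own proof, which is a one-line argument stating that the result is a direct consequence of the convergence in probability of $\tau_n^{Y,N}/\sqrt{\mathbb{E}[\sigma_0^2]}$ to $\tau_n$ together with the abstract plug-in proposition. Your version simply makes explicit the two ingredients the paper leaves implicit: the computation $\tau_n^Y = \sqrt{\mathbb{E}[\sigma_0^2]}\,\tau_n$ via $\gamma_Y = \mathbb{E}[\sigma_0^2]\gamma_X$, and the continuous mapping/Slutsky step that converts long-span consistency of $\tau_n^{Y,N}$ into the desired iterated $\plim$.
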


\begin{proof}
This is a direct consequence of the convergence in probability of $\tau_n^{Y,N} / \sqrt{\mathbb{E}[\sigma_0^2]}$ to $ \tau_n$ and an application of Proposition \ref{prop:feasible_scale_factor_convergence}.
\end{proof}

Now that we have established these variations of the weak law for the power variation, and arrived at a number of consistent estimators for feasible inference, let us consider applications to a specific choice of kernel, the gamma kernel, and we can see how each method of estimating $\tau_n$ can be used in this case.

\subsection{Scale factor for the gamma kernel}

For the gamma kernel $g(x) = x^\alpha e^{-\lambda x}$, we can obtain a closed form expression for the scale factor directly from the definition by first noting that $\gamma_X(0) = 2^{-2\alpha - 1} \lambda^{-2\alpha - 1} \Gamma(2\alpha + 1)$ and that $\gamma_X(h) = \frac{1}{2} \lambda^{-2\alpha - 1} \bar{K}_{\alpha + \frac{1}{2}}(\lambda h)$ for $h > 0$, where $K_\nu$ is the modified Bessel function of the third kind, and we define $ \bar{K}_\nu (x) = x^\nu K_\nu(x)$ \cite{barndorff2012notes}. Combining these two results, straightforward manipulations give the scale factor as 

$$ \tau_n =  \lambda^{-\alpha - \frac{1}{2}}  \left\{ \frac{\Gamma(\alpha + 1)}{\Gamma(\frac{1}{2})} \left( \Gamma\left(\alpha + \tfrac{1}{2}\right) - 2^{-\alpha + \frac{1}{2}} \bar{K}_{\alpha + \frac{1}{2}} (\lambda \Delta_n) \right) \right\}^{\frac{1}{2}}.$$

Since we have access to a closed form for $\tau_n$, we can use this in the simulation study to assess performance of the infeasible weak law, under the assumption of known parameters $\alpha$ and $\lambda$, and also our plug-in estimators after parameter estimation. We can also derive the asymptotic behaviour of $\tau_n$, as it has been shown \cite{barndorff2012notes} that the asymptotic behaviour of the Bessel function near zero is 

$$\bar{K}_\nu(x) = 2^{\nu -1} \Gamma(\nu) \left\{ 1 - 2^{-2\nu} \frac{\Gamma(1 - \nu)}{\Gamma(1 + \nu)}x^{2\nu} + \mathcal{O}(x^2) \right\}  \qquad \text{as} \ x \rightarrow 0^+.$$ 

Applying this to our case gives us the asymptotic result for $\tau_n^2$ that

\begin{align*}
\begin{split}
\tau_n^2 &= \frac{\Gamma\left(\alpha + 1\right) \Gamma\left(\alpha + \tfrac{1}{2}\right) \Gamma\left(\tfrac{1}{2} - \alpha \right)}{ \Gamma\left(\tfrac{1}{2}\right) \Gamma\left(\alpha + \tfrac{3}{2}\right)} \Delta_n^{2\alpha + 1} 2^{-\alpha - 1} + \mathcal{O}(\Delta_n^2) \\
&= 2^{-4\alpha - 1}\frac{\Gamma\left(2\alpha + 1\right)\Gamma\left( \frac{1}{2} - \alpha \right)}{\Gamma\left(\alpha + \tfrac{3}{2}\right)} \Delta_n^{2\alpha + 1}  + \mathcal{O}(\Delta_n^2)
\end{split}
\end{align*}
and thus for small $\Delta_n$, we may approximate $\tau_n$ by

$$\tilde{\tau}_n = 2^{-2\alpha - \frac{1}{2}} \left( \frac{\Gamma\left(2\alpha + 1\right)\Gamma\left( \frac{1}{2} - \alpha \right)}{\Gamma\left(\alpha + \tfrac{3}{2}\right)} \right)^{\frac{1}{2}} \Delta_n^{\alpha + \frac{1}{2}}.$$

Note that the asymptotic behaviour of the scale factor depends only on the smoothness parameter $\alpha$ and not the other parameter $\lambda$, which is of course because $\lambda$ controls the long term memory of the $\mathcal{BSS}$ process, and at the fine scale only $\alpha$ affects the behaviour.

\subsection{Central limit theorem for power variations}

Having established that the realised power variation converges weakly to the integrated volatility process, \cite{corcuera2013asymptotic} establishes the following central limit theorem for this convergence. 

\begin{theorem}
	\label{thm:infeasible_clt}
Assume that the conditions for the kernel function $g$ are satisfied and the process $\sigma$ is H{\"o}lder continuous of order $\gamma \in (0,1)$ for some $\gamma(p\wedge1)> 1/2.$ Suppose further that $\alpha \in (-\frac{1}{2},0)$. Then the following stable convergence holds

$$\Delta_n^{-1/2} \left(\Delta_n \tau_n^{-p} V(Y,p;\Delta_n)(t) - V(Y,p)(t) \right) \xrightarrow{st} \Lambda_p \int_0^t |\sigma_s|^p \ \mathrm{d}B_s$$
on the space $\mathbb{D}([0,T])$ equipped with a uniform topology, where $B$ is a Brownian motion defined on an extension of the original probability space $(\Omega, \mathcal{F}, \mathbb{P})$ and is independent of $\mathcal{F}.$ The factor $\Lambda_p$ is defined as 

$$\Lambda_p^2 := \lim_{n\rightarrow\infty} \Delta_n^{-1} \Var \left( \Delta_n^{1-pH} V(B^H, p; \Delta_n)(1) \right),$$
where $B^H$ denotes a fractional Brownian motion with Hurst parameter $H = \alpha + 1/2.$
\end{theorem}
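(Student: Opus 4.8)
The plan is the classical route for high-frequency limit theorems of $\mathcal{BSS}$ power variations: localize, linearize the increments around a frozen volatility, and reduce to a central limit theorem for a volatility-weighted functional of a stationary Gaussian array. By a localization argument (stopping times bounding $\sigma$ and its H\"older modulus) one may assume $\sigma$ and its H\"older constant are bounded. Writing $\Delta_i^n Y=\int_{\mathbb{R}}(g(i\Delta_n-s)-g((i-1)\Delta_n-s))\sigma_s\,\mathrm{d}W_s$ (with $g\equiv0$ on $(-\infty,0]$) and adding and subtracting $\sigma_{(i-1)\Delta_n}$ gives the exact decomposition
$$\Delta_i^n Y=\sigma_{(i-1)\Delta_n}\Delta_i^n X+R_i^n,\qquad \Delta_i^n X:=\int_{\mathbb{R}}\bigl(g(i\Delta_n-s)-g((i-1)\Delta_n-s)\bigr)\,\mathrm{d}W_s,$$
where $\Var(\Delta_i^n X)=R(\Delta_n)=\tau_n^2$. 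Using pathwise H\"older continuity of order $\gamma$ together with the kernel decay conditions (ii)--(iii), one shows $\|R_i^n\|_{L^2}\lesssim\tau_n\Delta_n^{\gamma}$ (up to lower-order tail terms controlled by the decay of $g'$), and then the inequality $\bigl||a+b|^p-|a|^p\bigr|\lesssim|b|^{p\wedge1}(|a|^{(p-1)_+}+|b|^{(p-1)_+})$ shows that replacing $|\Delta_i^n Y|^p$ by $|\sigma_{(i-1)\Delta_n}|^p|\Delta_i^n X|^p$ perturbs $\Delta_n\tau_n^{-p}V(Y,p;\Delta_n)(t)$ by $O_{\mathbb{P}}(\Delta_n^{\gamma(p\wedge1)})=o_{\mathbb{P}}(\Delta_n^{1/2})$, precisely because $\gamma(p\wedge1)>1/2$; the same condition makes the Riemann-sum error between $\Delta_n\sum_i|\sigma_{(i-1)\Delta_n}|^p$ and $\int_0^t|\sigma_s|^p\,\mathrm{d}s$ of the same negligible order. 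It therefore suffices to prove, with $\beta_i^n:=\Delta_i^n X/\tau_n$,
$$\Delta_n^{1/2}\sum_{i=1}^{\lfloor t/\Delta_n\rfloor}|\sigma_{(i-1)\Delta_n}|^p\bigl(|\beta_i^n|^p-m_p\bigr)\xrightarrow{st}\Lambda_p\int_0^t|\sigma_s|^p\,\mathrm{d}B_s\quad\text{in }\mathbb{D}([0,T]).$$

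Next I would analyze the Gaussian core. For fixed $n$, $(\beta_i^n)_i$ is a standard stationary Gaussian sequence with lag-$j$ correlation $r_n(j)=\bigl(R((j+1)\Delta_n)-2R(j\Delta_n)+R((j-1)\Delta_n)\bigr)/(2R(\Delta_n))$. Since $R(t)=t^{2\alpha+1}L_R(t)$ with $L_R$ slowly varying at $0$, one has $R(j\Delta_n)/R(\Delta_n)\to j^{2H}$ with $H:=\alpha+\tfrac12$, so $r_n(j)\to\rho(j):=\tfrac12(|j+1|^{2H}-2|j|^{2H}+|j-1|^{2H})$, the increment correlation of fractional Brownian motion $B^H$. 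Using the second-difference bound $|R((j+1)\Delta_n)-2R(j\Delta_n)+R((j-1)\Delta_n)|\le\Delta_n^2\sup_{(j-1)\Delta_n\le t\le(j+1)\Delta_n}|R''(t)|$ together with conditions (v)--(vi) on $R''$ and on $L_{R''}/L_R$, this convergence is upgraded to a uniform one with a summable dominating bound $|r_n(j)|\le c\,j^{2H-2}$ for all large $n$. Here the hypothesis $\alpha\in(-\tfrac12,0)$ enters: it forces $H\in(0,\tfrac12)$, hence $2H-2<-1$ and $\sup_n\sum_j|r_n(j)|<\infty$ (short-range dependence), a fortiori $\sup_n\sum_j r_n(j)^2<\infty$.

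With these ingredients I would first treat constant volatility, proving that $\Delta_n^{1/2}\sum_{i=1}^{\lfloor s/\Delta_n\rfloor}(|\beta_i^n|^p-m_p)$, as a process in $s$, converges in law in $\mathbb{D}([0,T])$ to $\Lambda_p W$ for a Brownian motion $W$, with $\Lambda_p^2=\sum_{k\in\mathbb{Z}}\Cov(|N_0|^p,|N_k|^p)$ for $(N_k)$ standard stationary Gaussian of correlation $\rho(k)$. This is a Breuer--Major-type statement for a triangular array: $f(x)=|x|^p-m_p$ is even, hence of Hermite rank $2$, and $\sum_k\rho(k)^2<\infty$ (valid whenever $H<\tfrac34$, in particular here) yields both the $\Delta_n^{-1/2}$ rate and $\Lambda_p^2<\infty$. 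I would prove it by the fourth-moment/chaos method (equivalently Malliavin--Stein), showing the fourth cumulant of the second-chaos component vanishes in the limit and higher-rank contributions are asymptotically negligible, transporting the fixed-kernel (fBm) computations to the array via the uniform domination of $r_n(\cdot)$; a direct variance computation using self-similarity and stationary increments of $B^H$ identifies $\Lambda_p^2$ with $\lim_n\Delta_n^{-1}\Var(\Delta_n^{1-pH}V(B^H,p;\Delta_n)(1))$, i.e.\ the constant in the statement, and a moment bound of Kolmogorov--Chentsov type gives tightness. To pass to variable volatility and obtain \emph{stable} convergence to $\Lambda_p\int_0^t|\sigma_s|^p\,\mathrm{d}B_s$ with $B$ defined on an extension and independent of $\mathcal{F}$, I would use a big-block/small-block decomposition: partition $[0,t]$ into blocks of $\ell_n\to\infty$ increments, freeze $\sigma$ at the left endpoint of each block, apply the constant-$\sigma$ CLT within each block, and invoke $\sum_j|\rho(j)|<\infty$ to show the block contributions are asymptotically uncorrelated across blocks and, after conditioning, independent of the frozen volatilities; this produces the mixed-normal limit with conditional variance $\Lambda_p^2\int_0^t|\sigma_s|^{2p}\,\mathrm{d}s$ and the extra $\mathcal{F}$-independent Brownian motion $B$.

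Two steps are delicate. The Gaussian-core step needs not merely $r_n(j)\to\rho(j)$ but a uniform summable domination; this is exactly where conditions (iv)--(vi) and the slowly-varying calculus (in particular the control of $L_{R''}/L_R$ relative to $L_R$) are essential, and it is the technical heart on the analytic side. The step I expect to be the genuine obstacle, however, is the stable, volatility-weighted CLT when $\sigma$ and $W$ are not assumed independent (the statement imposes no such independence): freezing $\sigma$ on a block and claiming the residual Gaussian fluctuation is conditionally independent of the past requires splitting $\Delta_i^n X$ into the part generated by $W$ over the current interval $((i-1)\Delta_n,i\Delta_n]$, which is independent of $\mathcal{F}_{(i-1)\Delta_n}$, and the $\mathcal{F}_{(i-1)\Delta_n}$-measurable memory part, and then exploiting that for $\alpha<0$ the singularity of $g$ at the origin makes the current-interval part carry the fluctuation at the block scale while the memory part is controlled inside the block CLT. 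Quantifying this interplay --- equivalently, justifying the mixed-normal limit and the independence of $B$ from $\mathcal{F}$ in the presence of leverage --- is the crux; if one is willing to assume $\sigma\perp W$, the block sums are, conditionally on $\sigma$, exactly sums of the stationary Gaussian functional above and the argument simplifies considerably.
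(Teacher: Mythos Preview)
The paper does not prove this theorem at all: Theorem~\ref{thm:infeasible_clt} is quoted as an established result from Corcuera, Hedevang, Pakkanen and Podolskij (2013), cited as \cite{corcuera2013asymptotic}, and is used as a black box in the proof of the feasible CLT (Theorem~\ref{thm:feasible_clt}). So there is no ``paper's own proof'' to compare against.

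That said, your sketch is broadly faithful to the architecture of the proof in the cited reference. The reduction to a volatility-weighted functional of the normalized Gaussian core via the decomposition $\Delta_i^n Y=\sigma_{(i-1)\Delta_n}\Delta_i^n X+R_i^n$, the negligibility of the remainder under the H\"older assumption $\gamma(p\wedge1)>1/2$, the identification of the limiting correlation structure $r_n(j)\to\rho(j)$ with fBm increments via the regular-variation conditions on $R$ and $R''$, and the Breuer--Major/fourth-moment route to the CLT for the Gaussian array are all the right ingredients. Your diagnosis of the two delicate points is also on target: the uniform summable domination of $r_n(\cdot)$ is exactly what conditions (iv)--(vi) are designed to deliver, and the stable convergence with possibly correlated $\sigma$ and $W$ is handled in \cite{corcuera2013asymptotic} by a blocking/conditioning argument of the type you describe. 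If anything, your outline is more of a reconstruction of \cite{corcuera2013asymptotic} than of anything in the present paper, which simply invokes that result.
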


We can see that the limiting process is a mixed Gaussian process, with zero mean and conditional variance given by 

$$ \mathbb{E}\left[ \left( \Lambda_p \int_0^t |\sigma_s|^p \ \mathrm{d}B_s \right)^2 \right] =   \Lambda_p^2 \int_0^t |\sigma_s|^{2p} \ \mathrm{d}s.$$

Thus, a consequence of the central limit theorem for power variations is that, in the limit as $n \rightarrow \infty$, we have 

$$\Delta_n^{-1/2} \frac{\left(\Delta_n \tau_n^{-p} V(Y,p;\Delta_n)(t) - V(Y,p)(t) \right)}{\Lambda_p \sqrt{ \int_0^t |\sigma_s|^{2p} \ \mathrm{d}s}} \dot{\sim} N(0,1).$$
where $\dot{\sim}$ denotes asymptotic normality. As with the weak law, this limit theorem is infeasible in the sense that it relies on knowledge of the scaling factors $\tau_n$ and $\Lambda_p$, which are both dependent on the underlying Gaussian core of the $\mathcal{BSS}$ process, and also because the denominator relies on $\sigma$. So in an analagous manner to before, we may construct a feasible limit theorem as follows. 

\begin{theorem}
	\label{thm:feasible_clt}
Assume as before that the conditions for the kernel function $g$ are satisfied and the process $\sigma$ is H{\"o}lder continuous of order $\gamma \in (0,1)$ for some $\gamma(p\wedge1)> 1/2.$ Suppose further that $\alpha \in (-\frac{1}{2},0)$. Let $(N^{*}_{n})_{n\in\mathbb{N}}$ be such that $N^{*}_{n}=E_{n}^{2}n^{1+2\delta}$. Then, for every $(N_{n})_{n\in\mathbb{N}}$ with $N_{n}\geq N^{*}_{n}$ and $n\in\mathbb{N}$, we have
	\begin{align*}
	\Delta_n^{-1/2}\left(\Delta_n (\tau_n^{Y,N_n})^{-p}V(Y,p;\Delta_n)(t) -[\mathbb{E}(\sigma_0^2)]^{-p/2}V(Y,p)(t)\right) \xrightarrow{st} \Lambda_p [\mathbb{E}(\sigma_0^2)]^{-p/2}\int_0^t |\sigma_s|^p \ \mathrm{d}B_s,
	\end{align*}
	for every $t\geq0$ where $B$ denotes a Brownian motion defined on an extension of the original probability space $(\Omega, \mathcal{F}, \mathbb{P})$, which is independent of $\mathcal{F}$, and, as before,
	\begin{align*}
		\Lambda_p^2 := \lim_{n \to \infty}\Delta_n^{-1}\textnormal{Var}\left(\Delta_n^{1-pH}V(B^H,p;\Delta_n)(1)\right),
	\end{align*}
	where $B^H$ denotes a fractional Brownian motion with Hurst parameter $H = \alpha + 1/2$. 
\end{theorem}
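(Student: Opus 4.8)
The strategy is to deduce the feasible central limit theorem from the infeasible one (Theorem~\ref{thm:infeasible_clt}) by treating the estimated scale factor as a multiplicative perturbation of the infeasible statistic and showing that this perturbation is negligible at the $\Delta_n^{-1/2}$ rate imposed by the CLT. Write $b:=\mathbb{E}[\sigma_0^2]$ and abbreviate $\hat\tau_n:=\tau_n^{Y,N_n}$ and $Z_n(t):=\Delta_n\tau_n^{-p}V(Y,p;\Delta_n)(t)$. A preliminary observation is the identity $\tau_n^Y=\sqrt{b}\,\tau_n$, which follows from $\gamma_Y(h)=b\,\gamma_X(h)$ together with $\mathbb{E}[|\Delta_1^nY|^2]=2\gamma_Y(0)-2\gamma_Y(\Delta_n)=b\,R(\Delta_n)=b\,\tau_n^2$. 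Since $\Delta_n\hat\tau_n^{-p}V(Y,p;\Delta_n)(t)=(\tau_n/\hat\tau_n)^p\,Z_n(t)$, adding and subtracting $b^{-p/2}Z_n(t)$ gives the decomposition
\begin{align*}
\Delta_n^{-1/2}\big(\Delta_n\hat\tau_n^{-p}V(Y,p;\Delta_n)(t)-b^{-p/2}V(Y,p)(t)\big)
&=\underbrace{b^{-p/2}\,\Delta_n^{-1/2}\big(Z_n(t)-V(Y,p)(t)\big)}_{=:\,\mathrm{(I)}_n(t)}\\
&\quad+\underbrace{\Delta_n^{-1/2}\big((\tau_n/\hat\tau_n)^p-b^{-p/2}\big)Z_n(t)}_{=:\,\mathrm{(II)}_n(t)}.
\end{align*}
By Theorem~\ref{thm:infeasible_clt}, $\mathrm{(I)}_n\xrightarrow{st}b^{-p/2}\Lambda_p\int_0^t|\sigma_s|^p\,\mathrm{d}B_s$ in $\mathbb{D}([0,T])$ with the uniform topology, which is exactly the asserted limit; so it remains to show that $\mathrm{(II)}_n$ converges to zero uniformly on $[0,T]$ in probability, and then to conclude using the stability of stable convergence under addition of such a null term.

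For $\mathrm{(II)}_n$, note that the factor $\Delta_n^{-1/2}\big((\tau_n/\hat\tau_n)^p-b^{-p/2}\big)$ does not depend on $t$, whereas $\sup_{t\le T}|Z_n(t)|=O_{\mathbb{P}}(1)$ by the infeasible weak law (Theorem~\ref{thm:infeasible_weak_law}), since $Z_n\to V(Y,p)$ uniformly on $[0,T]$ in probability and $\sup_{t\le T}V(Y,p)(t)<\infty$ almost surely. Hence it suffices to prove $(\tau_n/\hat\tau_n)^p-b^{-p/2}=o_{\mathbb{P}}(\Delta_n^{1/2})$. Setting $u_n:=\hat\tau_n/\tau_n^Y-1$ and using $\tau_n^Y=\sqrt{b}\,\tau_n$, a first-order expansion gives $(\tau_n/\hat\tau_n)^p-b^{-p/2}=b^{-p/2}\big((1+u_n)^{-p}-1\big)$, which on the event $\{|u_n|\le 1/2\}$ — of probability tending to one — is bounded in absolute value by a constant multiple of $|u_n|$. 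The task therefore reduces to $u_n=o_{\mathbb{P}}(\Delta_n^{1/2})$, i.e.\ to $|\hat\tau_n-\tau_n^Y|=o_{\mathbb{P}}\big(\tau_n\Delta_n^{1/2}\big)$. Here the quantitative content of Proposition~\ref{prop:feasible_scale_factor_convergence} is used: the variance bound underlying that result yields, via Chebyshev's inequality, $\mathbb{P}\big(|\hat\tau_n-\tau_n^Y|>\epsilon\big)\le C\,E_n^2/(N_n\epsilon^2)$ for a constant $C$, so taking $\epsilon=\eta\,\tau_n\Delta_n^{1/2}$ and using $N_n\ge N_n^*=E_n^2 n^{1+2\delta}$ bounds this by $C/(\eta^2\,n^{1+2\delta}\,\tau_n^2\,\Delta_n)$, in which the factor $E_n^2$ cancels. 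Since $\alpha\in(-\tfrac12,0)$ entails $\tau_n^2=R(\Delta_n)=\Delta_n^{2\alpha+1}L_R(\Delta_n)$ with $2\alpha+2\in(1,2)$, the polynomial divergence of $1/(\tau_n^2\Delta_n)$ is dominated by $n^{1+2\delta}$ once $\delta$ is taken sufficiently large (the same $\delta$ fixing $N_n^*$), so the bound tends to zero; this gives $\sup_{t\le T}|\mathrm{(II)}_n(t)|\xrightarrow{\mathbb{P}}0$.

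It remains to assemble the pieces. Multiplication by the deterministic constant $b^{-p/2}$ preserves stable convergence, and adding to the stably convergent $\mathrm{(I)}_n$ the summand $\mathrm{(II)}_n$, which converges to zero uniformly on $[0,T]$ in probability, yields $\mathrm{(I)}_n+\mathrm{(II)}_n\xrightarrow{st}b^{-p/2}\Lambda_p\int_0^t|\sigma_s|^p\,\mathrm{d}B_s$ in $\mathbb{D}([0,T])$ with the uniform topology; since $b^{-p/2}=[\mathbb{E}(\sigma_0^2)]^{-p/2}$ and the left-hand side is precisely the feasible statistic, this is the claim.

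\emph{The main obstacle.} The delicate point is the final rate comparison for $\mathrm{(II)}_n$: because $\tau_n\downarrow 0$, the \emph{relative} estimation error of $\hat\tau_n$ is amplified by $\tau_n^{-1}$, and it must at the same time beat the $\Delta_n^{-1/2}$ blow-up from the CLT scaling. Everything hinges on Proposition~\ref{prop:feasible_scale_factor_convergence} supplying not merely the consistency of $\tau_n^{Y,N}$ but an explicit $E_n$-type bound on its fluctuations, and on the long-span sample size being driven at least as fast as $N_n^*=E_n^2 n^{1+2\delta}$; the crux is the verification that this lower bound is genuinely large enough — that is, that $n^{1+2\delta}\tau_n^2\Delta_n\to\infty$ — which is exactly where the standing hypothesis $\alpha<0$ (equivalently $H<\tfrac12$) and a sufficiently large choice of $\delta$ enter.
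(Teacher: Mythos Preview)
Your decomposition into $\mathrm{(I)}_n+\mathrm{(II)}_n$ and the reduction of $\mathrm{(II)}_n$ to the smallness of $u_n=\hat\tau_n/\tau_n^Y-1$ are exactly the paper's strategy. The gap is in the Chebyshev step. Proposition~\ref{prop:feasible_scale_factor_convergence} controls the \emph{ratio}: its proof gives $\mathbb{E}\big[(r_n^2-1)^2\big]\le E_n^2/N$ with $r_n=\hat\tau_n/\tau_n^Y$, not a variance bound on $\hat\tau_n-\tau_n^Y$ with an $n$-free constant. If you translate the bound to the difference you pick up a factor $(\tau_n^Y)^2=b\,\tau_n^2$, i.e.\ $\mathbb{P}(|\hat\tau_n-\tau_n^Y|>\epsilon)\lesssim (\tau_n^Y)^2E_n^2/(N\epsilon^2)$, and then with $\epsilon=\eta\,\tau_n\Delta_n^{1/2}$ the $\tau_n^2$ cancels and the bound becomes $b/(\eta^2 n^{2\delta})\to 0$ for \emph{every} $\delta>0$. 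Your stated bound $\mathbb{P}(|\hat\tau_n-\tau_n^Y|>\epsilon)\le C E_n^2/(N\epsilon^2)$ drops this $(\tau_n^Y)^2$ factor, which is what produces the spurious divergent term $1/(\tau_n^2\Delta_n)\sim n^{2\alpha+2}$ and forces you to ask for ``$\delta$ sufficiently large'' (in fact $\delta>\alpha+\tfrac12$). That extra constraint is not part of the theorem; as stated, it must hold for any $\delta>0$.

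The clean fix---and what the paper does---is to never leave the ratio: from $\mathbb{E}\big[(r_n^2-1)^2\big]\le E_n^2/N_n\le n^{-(1+2\delta)}$ one gets $r_n^2-1=o_{\mathbb{P}}(\Delta_n^{1/2})$ directly, hence (since $r_n+1\ge 1$) $u_n=r_n-1=o_{\mathbb{P}}(\Delta_n^{1/2})$ and therefore $(\tau_n/\hat\tau_n)^p-b^{-p/2}=b^{-p/2}\big((1+u_n)^{-p}-1\big)=o_{\mathbb{P}}(\Delta_n^{1/2})$ with no restriction on $\delta$. Equivalently, the paper observes $\Delta_n^{-1/2}\big[(\tau_n^{Y,N})^{-p}/(\tau_n^{Y})^{-p}-1\big]\to 0$ in probability straight from Proposition~\ref{prop:feasible_scale_factor_convergence}, then multiplies by the $O_{\mathbb{P}}(1)$ factor $\Delta_n(\tau_n^Y)^{-p}V(Y,p;\Delta_n)(t)$. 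Your ``main obstacle'' paragraph thus misidentifies the crux: the amplification by $\tau_n^{-1}$ is a red herring introduced by the mis-stated Chebyshev bound, not a genuine feature of the problem.
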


So now we can replace this quantity with an estimator, using the weak law previously established. The non-random quantity $\Lambda_p$ is expressible as an infinite sum \cite{corcuera2013asymptotic} and hence we may approximate this by simply truncating the sum. Then, since we have established that 

$$\lim_{n \rightarrow \infty} \lim_{N \rightarrow \infty} \Delta_n (\tau_n^{Y,N})^{-2p} V(Y,2p;\Delta_n)(t) = \mathbb{E}[\sigma_0^2]^{-p}V(Y,2p)(t) = m_{2p} \mathbb{E}[\sigma_0^2]^{-p}\int_0^t |\sigma_s|^{2p} \ \mathrm{d}s,$$
we can use this estimate, and separately approximate $\Lambda_p$, to formulate an approximate central limit theorem as 

$$\Delta_n^{-1/2} \frac{\left(\Delta_n (\tau_n^Y)^{-p} V(Y,p;\Delta_n)(t) - \mathbb{E}[\sigma_0^2]^{-p/2}V(Y,p)(t) \right)}{\widehat{\Lambda}_p \sqrt{ m_{2p}^{-1}(\tau_n^{Y,N})^{-2p} V(Y,2p;\Delta_n)(t)}} \sim N(0,1).$$

We could loosely consider this to be a `semifeasible' limit theorem, in the sense that the only quantity that is not estimated directly from the data is $\tau_n^Y$. Barndorff-Nielsen et al. \cite{barndorff2014assessing} have shown that where we may estimate $\alpha$ in a consistent way, then we may replace $\tau_n$ with an estimator $\hat{\tau}_n$ and then the limit is changed, both in terms of limiting distribution and in terms of the rate of convergence, with the estimation of $\alpha$ slowing the rate of convergence. We may seek to go a step further and ask whether we can replace $\tau_n^Y$ with the estimator $\tau_n^{Y,N}$, assess whether the limit theorem still holds. We may consider the distribution of

$$\Delta_n^{-1/2} \frac{\left(\Delta_n (\tau_n^{Y,N})^{-p} V(Y,p;\Delta_n)(t) - \mathbb{E}[\sigma_0^2]^{-p/2}V(Y,p)(t) \right)}{\widehat{\Lambda}_p \sqrt{ m_{2p}^{-1}(\tau_n^{Y,N})^{-2p} V(Y,2p;\Delta_n)(t)}}$$
in the limit as $n \rightarrow \infty, N \rightarrow \infty$, and consider this to be a `feasible' limit theorem, if the convergence still holds. 

The limit theorem is important as it enables us to go beyond point estimations and obtain asymptotic confidence intervals for the integrated volatility. For $a \in (0,1)$, a $(1-a) \times 100 \%$ asymptotic confidence interval for $V(Y,p)(t)$ in the semifeasible case is given by

\begin{equation*}
\begin{split}
\Bigg(\Delta_n \tau_n^{-p} V(Y,p;\Delta_n)(t) &- z_{1-a/2} \Delta_n^{1/2} \widehat{\Lambda}_p \sqrt{ m_{2p}^{-1}(\tau_n^{Y,N})^{-2p} V(Y,2p;\Delta_n)(t)}, \\
&\Delta_n \tau_n^{-p} V(Y,p;\Delta_n)(t) + z_{1-a/2} \Delta_n^{1/2} \widehat{\Lambda}_p \sqrt{ m_{2p}^{-1}(\tau_n^{Y,N})^{-2p} V(Y,2p;\Delta_n)(t)}\Bigg).
\end{split}
\end{equation*}

The question for the simulation study will be whether the corresponding feasible interval, 

\begin{equation*}
\begin{split}
\Bigg(\Delta_n (\tau_n^{Y,N})^{-p} V(Y,p;\Delta_n)(t) &- z_{1-a/2} \Delta_n^{1/2} \widehat{\Lambda}_p \sqrt{ m_{2p}^{-1}(\tau_n^{Y,N})^{-2p} V(Y,2p;\Delta_n)(t)}, \\
&\Delta_n (\tau_n^{Y,N})^{-p} V(Y,p;\Delta_n)(t) + z_{1-a/2} \Delta_n^{1/2} \widehat{\Lambda}_p \sqrt{ m_{2p}^{-1}(\tau_n^{Y,N})^{-2p} V(Y,2p;\Delta_n)(t)}\Bigg),
\end{split}
\end{equation*}
is an appropriate approximate confidence interval for feasible inference on the integrated volatility process.

\section{Simulation study}

To test the convergence of the asymptotic theory in practice, we employ a simulation study, whereby a number of $\mathcal{BSS}$ sample paths are simulated, and subsampled to provide a number of paths with varying frequencies. Then the finite sample behaviour of the various asymptotic results can be tested, to assess convergence in both in-fill and long span. In this chapter we outline the methods and details such as parameter choices involved in the simulation study. The approach is an implementation of the \text{hybid scheme method} described in \cite{bennedsen2014discretization} and \cite{bennedsen2017hybrid}. The code implementation for the simulation of the processes, fitting of $\mathcal{BSS}$ process to data and estimation of the accumulated volatility have been arranged into new \texttt{R} package, \texttt{BSS}, available on CRAN. 

\subsection{Volatility process}
The volatility process chosen is an exponentiated Ornstein-Uhlenbeck process,

$$\sigma_t = \exp( \beta v_t)$$
where $\beta \in \mathbb{R}$ and $v_t$ satisfies the stochastic differential equation 
$$dv_t = - \theta v_t \ dt + dB_t$$
for $\theta > 0$, where $B$ is a Brownian motion which is independent of $W$. We don't actually need to specify that $\sigma$ is independent of $W$, as the theoretical convergence results do not depend this assumption, and we could have some (possibly stochastic) correlation $\rho_t$ between the two and then use the decomposition 

$$B_t = \rho_t W_t + \sqrt{1 - \rho_t^2} W^{\bot}_t, $$
where $W^{\bot}$ is a Brownian motion independent of $W$. However, in the simulation study we will assume independence for simplicity, and so that $\sigma$ can be simulated separately from $W$. This can be simulated directly, by using an Euler scheme to discretise the differential equation into 

$$ v_{i\Delta_n} = (1- \Delta_n \theta)v_{(i-1)\Delta_n} + \sqrt{\Delta_n} B_i $$
where $B_i$ is a draw from a standard normal distribution for $i = -N_n, -N_n + 1, ... , nT - 1.$ The volatility process is then generated by taking $\sigma_{i\Delta_n} = \exp( \beta v_{i\Delta_n})$. Since we want the volatility process to be in the stationary regime, we initialise $v_{-N_n \Delta_n}$ from the stationary distribution $N(0, 1/2\theta)$ and then propagate the rest of the sample path.

\subsection{Parameter choices and simulation procedure}
In the simulation study, we simulate $M = 5000$ sample paths at a frequency of $\Delta_n = 1/25000$ with $T = 1$. This choice of $T$ and $\Delta_n$ reflects approximate simulation of the process over a trading day, with observations taken every second (true value $25200$ observations), so the paths therefore could reflect the price of a stock over a market day. So throughout the discussion of the results, we will take $\Delta_n = 1/25000$ to represent $1s$, and hence $\Delta_n = 60/25000$ to represent $1m$, and so on, and use the two representations for frequency interchangeably. 

For the volatility process we set $\theta = 2$ and $\beta = 0.125$, so that the volatility does not fluctuate too rapidly. For the $\mathcal{BSS}$ process, we choose a gamma kernel with smoothness parameter $\alpha = -0.2$, and exponent parameter $\lambda = 1$. The real parameter of interest is the smoothness parameter $\alpha$, and this choice takes us into the non-semi-martingale case, and also with the kernel diverging at zero. For each simulation, the $\mathcal{BSS}$ sample path, $Y_{i\Delta_n}$ for $i = 0,1,...,25000$ is saved, as well as the Gaussian core $X_{i\Delta_n}$, found by using the same Brownian increments and employing the hybrid scheme with the volatility set to be identically one, and the volatility process $\sigma_{i\Delta_n}$ for $i = 0,1,...,25000.$

To simulate $\mathcal{BSS}$ paths at different frequencies, these sample paths are simply subsampled, so that for example a path at frequency $\Delta_n = 1/12500$ is obtained by simply taking every other element of the original path. All calculations are done exactly where possible, and the integrated volatility process was approximated by discretising the integral from the simulated paths,

$$ \int_0^t |\sigma_s|^p \ \mathrm{d}s \approx \Delta_n \sum_{i = 0}^{\lfloor t/\Delta_n \rfloor} |\sigma_{i\Delta_n}|^p .$$

\section{Results of the simulation study}
\subsection{Non-parametric estimation of the scale factor}
We first assess the convergence of the non-parametric estimator for the scale factor, in particular whether the convergence established in Proposition \ref{prop:clt_scale_factor} holds. We approximate $V_{00}, V_{01}$ and $V_{11}$ by truncating the sums, and hence calculate the variance of the limiting distribution.

Figure \ref{fig:density_tau_N} shows density plots of 

$$ \sqrt{N} \frac{ \left( \tau_n^N - \tau_n \right)}{\sqrt{v}/2\tau_n},$$ 
for $\Delta_n = 1/25000$ and $N = 100, 10000, 25000$, overlaid on standard $N(0,1)$ densities to show the limit theorem taking effect. We can see that the limit theorem is shown very rapidly, with the centred and scaled distribution resembling the standard Gaussian with only $N=100$ observations. Of course, this non-parametric estimator is infeasible in that it relies on the Gaussian core. So we also consider the accuracy of the feasible non-parametric estimator, $\tau_n^{Y,N}$. Figure \ref{fig:density_tau_YN} shows plots of 

$$ \sqrt{N} \frac{ \left(\tau_n^{Y,N} - \exp\left(\beta^2/2\theta\right)\tau_n \right)}{v^Y},$$ 
for $\Delta_n = 1/25000$ and $N = 100, 1000, 25000$, overlaid on standard $N(0,1)$ densities. What is clear is that the distribution does not converge towards a standard Gaussian as $N$ increases, and indeed the variance here increases with $N$, suggesting that the rate of convergence is slower than $\sqrt{N}$. Comparing the standard deviations of $\tau_n^{Y,N}$ for different values of $N$ suggests that the rate of convergence is approximately $N^{0.14}$, indicating experimentally that $\tau_n^{Y,N}$ is indeed a consistent estimator for $\sqrt{\mathbb{E}[\sigma_0^2]} \tau_n,$ but that the rate of convergence is slower than for the infeasible non-parametric estimator $\tau_n^N.$ 

\begin{figure}[!b]
\centering
\includegraphics[width=16cm]{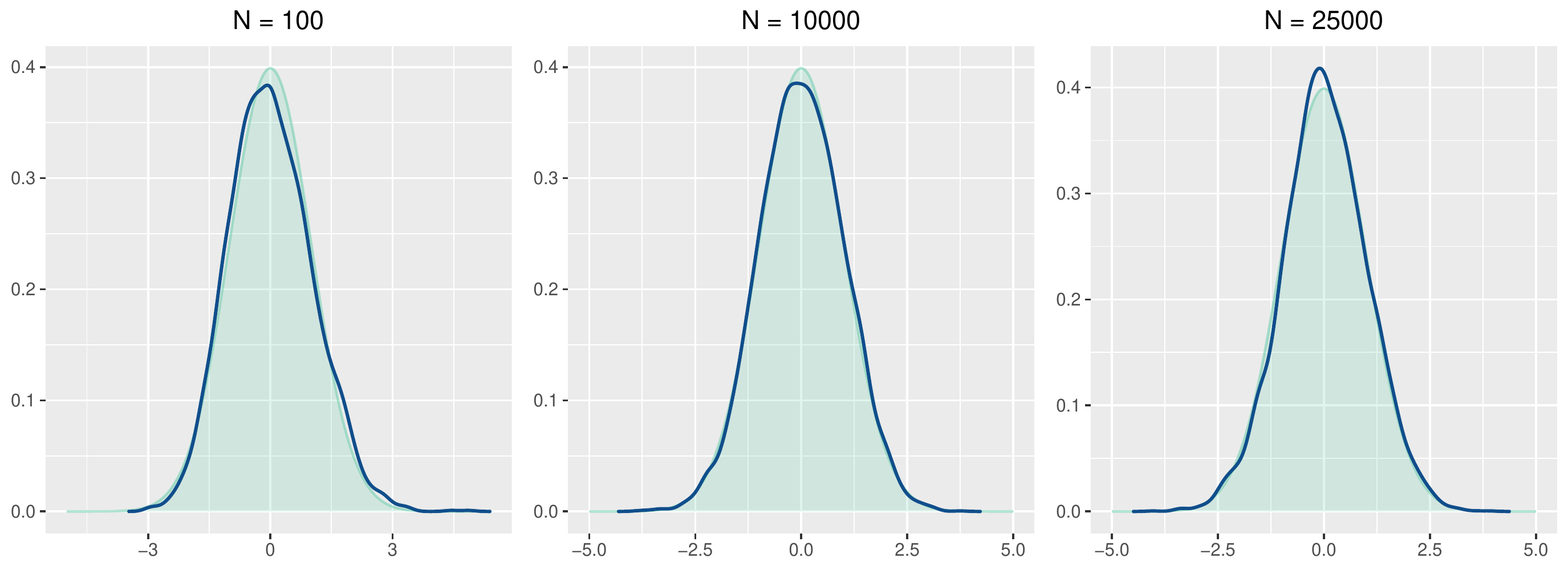}
\caption{Centred and scaled density plots for the non-parametric estimator $\tau_n^N$ using $\Delta_n = 1/25000$ for $N=100,10000, 25000$, overlaid on $N(0,1)$.}
\label{fig:density_tau_N}
\end{figure}

\begin{figure}[!b]
\centering
\includegraphics[width=16cm]{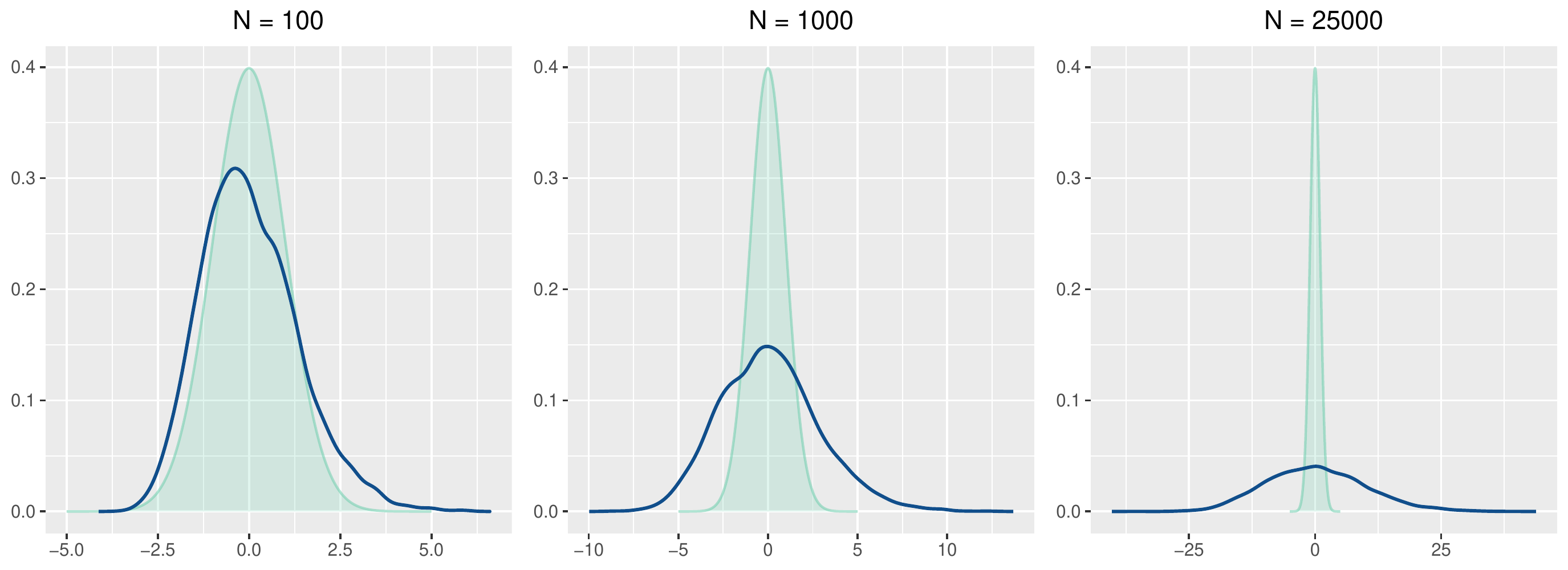}
\caption{Centred and scaled density plots for the non-parametric estimator $\tau_n^{Y,N}$ using $\Delta_n = 1/25000$ for $N=100,1000, 25000$, overlaid with $N(0,1)$.}
\label{fig:density_tau_YN}
\end{figure}

\subsection{Weak law}
In Table \ref{table:rmse}. we compare the root mean square error of a the weak law using a number of estimators of the integrated volatility, each defined by the estimator of the scale factor used. We compare the infeasible oracle estimate using $\tau_n$, with the infeasible  estimate using the asymptotic expression $\tilde{\tau}_n$, two parametric estimators which can be shown to be consistent: $\tau_n^{ACF}$ which uses least squares regression on the autocorrelation function of the paths to estimate $\alpha$ and puts that into the asymptotic expression for the scale factor (see \cite{bennedsen2020semiparametric} for more on this approach), and $\tau_n^{COF}$ which estimates $\alpha$ using the change of frequency method described in \cite{corcuera2013asymptotic}. We present results for the case $p = 2$, the integrated squared volatilty.  

The tables indicate that from the four feasible estimators, the non-parametric estimate performs the best, at all frequencies. We see that at high frequencies, the performance of the estimators based on $\tau_n^{ACF}$ and $\tau_n^{COF}$ are very similar, indicating that they estimate $\alpha$ to a similar level of accuracy, but at lower frequencies, the error of $\tau_n^{ACF}$ is far higher. Where an error was extremely large, we simply indicate that it is over $10^3$. Both parametric estimators become very innacurate at lower frequencies, with the ACF method performing poorly. This is due to slow decay of the ACF which causes the sample ACF to differ from the true ACF unless the number of observations is very large, in turn leading to large parameter estimation errors. On the other hand, the COF method only relies on the local behaviour of the process so is less affected by data not covering long time spans. 

We also witness counter-intuitive behaviour for the non-parametric estimator, that although at high frequencies it performs worse than the infeasible estimator, it actually performs better, in terms of RMSE, at frequencies lower than $5m$, which defies the intuition that the infeasible should be the best performing at any frequency, as it uses the true scale factor. In this case, the scale factor and realised power variation were both estimated using all available data ($t=T$), and so actually cancel out with each other in the calculation, leading to a constant value across all paths. For fixed $t$, we can improve the estimates for $V(Y, 2)(t)$ as new data is available by updating the estimates of $\tau_n^{N,Y}$ using all observations up to some further time $T > t$.

In general, for the simulation study, the non-parametric estimator provides the lowest error in estimating the integrated volatility process across all feasible estimators. 

\begin{table}[!h]
\centering
\begin{tabular}{c|cccccc}
\multicolumn{1}{c|}{\textbf{\textit{RMSE}}} & \multicolumn{6}{c}{$\Delta_n$} \\ 
\hline
Scale factor used & 1s  & 5s & 30s  & 1m & 5m  & 30m\\
\hline
$\tau_n$ 		&	.0096	&	.0213	&	.0527	&	.0751	&	.1661	&	.4042	\\
$\tilde{\tau}_n$	&	.0096	&	.0213	&	.0527	&	.0751	&	.1660	&	.4023 	\\
$\tau_n^{ACF}$		&	.3735	&	.5042	&	1.323	&	2.097	&	$>10^3$	&	$>10^3$	\\
$\tau_n^{COF}$		&	.2347	&	.4935	&	1.439	&	2.463	&	10.21	&	7.952			\\
$\tau_n^{Y,N}$		&	.0949	&	.0949	&	.0949	&	.0950	&	.0951	&	.1156	\\
\end{tabular}
\caption{Root mean square errors in estimating the integrated squared volatility process, comparing different scale factors.}
\label{table:rmse}
\end{table}

\subsection{Infeasible limit theorem results}
As a benchmark, we first test the convergence of the infeasible limit theorem, by comparing the distribution of the centred and scaled errors, 

$$ \Delta_n^{-1/2} \frac{\left(\Delta_n \tau_n^{-2} V(Y,2;\Delta_n)(t) - V(Y,2)(t) \right)}{\Lambda_2 \sqrt{ \int_0^t \sigma_s^{4} \ \mathrm{d}s}},$$
with the standard $N(0,1)$. Again we focus on $p=2$ as the other cases are similar. The constant $\Lambda_2$ was calculated by truncating the sum described in \cite{corcuera2013asymptotic}, and the integral $\int_0^t \sigma_s^{4} \ \mathrm{d}s$ was approximated using a Riemann sum from the simulated volatility paths. Figure \ref{fig:infeasible_clt_density} shows density plots for the centred and scaled errors, compared with a $N(0,1)$ distribution. We can observe that the limit theorem holds well for $\Delta_n = 1s$, and also reasonably well for $\Delta_n = 5m$, but not so well at the lower frequency. Therefore, as a rule of thumb, we suggest that we start to observe the limit theorem taking effect at around the $5m$ frequency. 

\begin{figure}[!t]
\includegraphics[width=16cm]{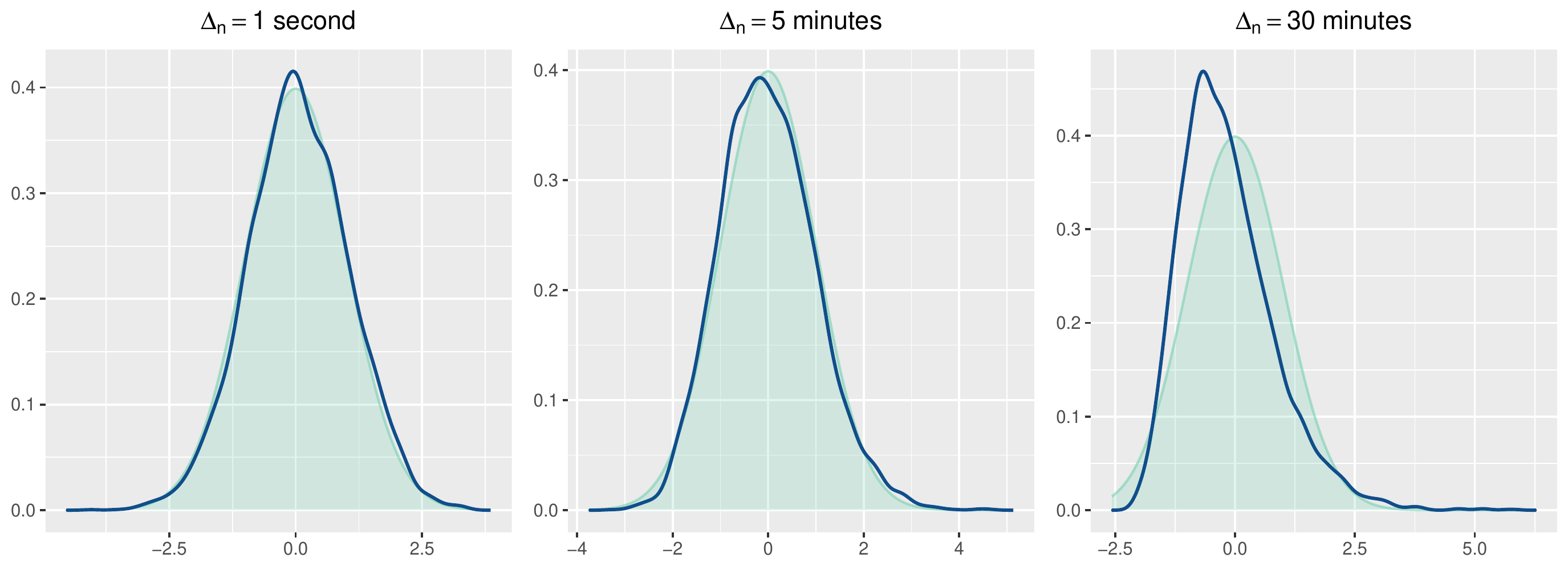}
\caption{Density plots for the centred and scaled errors for estimating $V(Y,2)(t)$ in the infeasible case, for $\Delta_n = 1s, 5m, 30m$.} 
\label{fig:infeasible_clt_density}
\centering
\end{figure}

\subsection{Semifeasible limit theorem results}
We now assess the convergence of a semifeasible limit, where the denominator has been replaced by the estimate based on the non-parametric estimator. So we may assess whether the following asymptotic distribution holds, again focussing on $p=2$, as $N \rightarrow \infty$, $n \rightarrow \infty$

$$\Delta_n^{-1/2} \frac{\left(\Delta_n (\tau_n^Y)^{-2} V(Y,2;\Delta_n)(t) - \mathbb{E}[\sigma_0^2]^{-1}V(Y,2)(t) \right)}{\widehat{\Lambda}_2 \sqrt{ m_{4}^{-1}(\tau_n^{Y,N})^{-4} V(Y,4;\Delta_n)(t)}} \sim N(0,1).$$

Figure \ref{fig:clt_semi_feasible_density} shows density plots for these semifeasible errors, compared with standard a $N(0,1)$ distribution. Here we see eventual convergence for $\Delta_n = 1s$, but not for $5m$ or $30m$, which demonstrate a far higher variance. Note that since these were calculated over the same sample paths of finite length, subsampled for the lower frequencies, we are increasing both the number of observations $N$, and frequency $n$, as we move from lower frequencies to higher frequencies, which is probably why there is such a noticeable improvement in the convergence - at $30m$ we have $n = 1800$, but only $N = 13$ observations, compared with $n = 25000$ and $N = 25000$ for the $1s$ case. We would assume that for fixed $\Delta_n$, with longer sample paths, the convergence would improve, so that the limit theorem would be observed at lower frequencies. Given the simulated data, the semifeasible limit theorem begins to be observed for frequencies around $1m$ and higher.

\begin{figure}[!t]
\includegraphics[width=16cm]{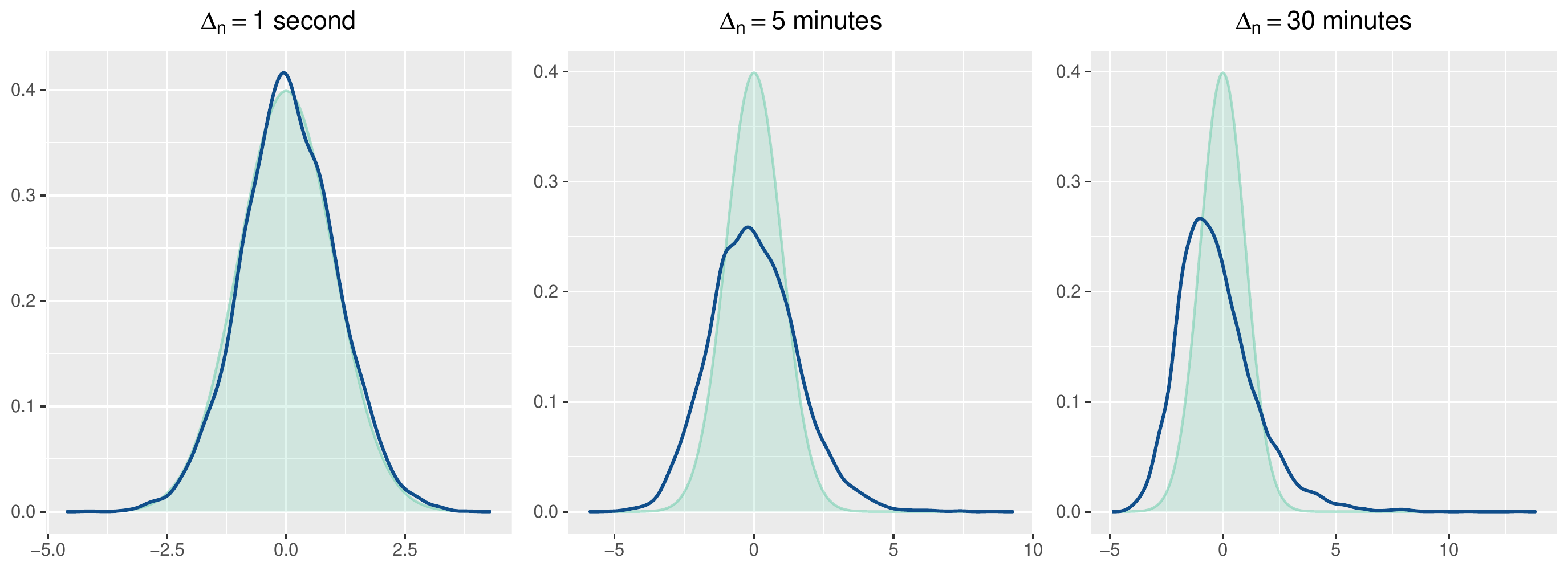}
\caption{Density plots for the centred and scaled errors for estimating $V(Y,2)(t)$ in the semifeasible case, for $\Delta_n = 1s, 5m, 30m$.} 
\label{fig:clt_semi_feasible_density}
\centering
\end{figure}

\subsection{Feasible limit theorem results}
We now consider the distribution of our feasible estimator errors, i.e. we look at 

$$\Delta_n^{-1/2} \frac{\left(\Delta_n (\tau_n^{Y,N})^{-2} V(Y,2;\Delta_n)(t) - \mathbb{E}[\sigma_0^2]^{-1}V(Y,2)(t) \right)}{\widehat{\Lambda}_2 \sqrt{ m_{4}^{-1}(\tau_n^{Y,N})^{-4} V(Y,4;\Delta_n)(t)}},$$
and again compare with a standard $N(0,1)$ distribution. As can be seen from Figure \ref{fig:non_parametric_clt_density}, the overall shape of the distribution appears approximately normal, but the variance is increasing as $\Delta_n$ decreases, directly mirroring the increasing variance witnessed in the estimation of $\tau_n^{Y,N}$. Thus we can arrive at a similar conclusion, that the convergence is at a rate which is slower than $\Delta_n^{1/2}$. Note that we have again restricted ourselves to $N = \lfloor T / \Delta_n \rfloor$, so that we are not considering the effect of infill or long span asymptotics independently, but are taking the limit of both simultaneously. This could account for some of the variation seen here, and further study could simulate longer sample paths so that we could isolate for example the effect of changing $\Delta_n$ for fixed, large $N$, or alternatively, isolate the effect of increasing $N$ for fixed $\Delta_n$.

\begin{figure}[!t]
\includegraphics[width=16cm]{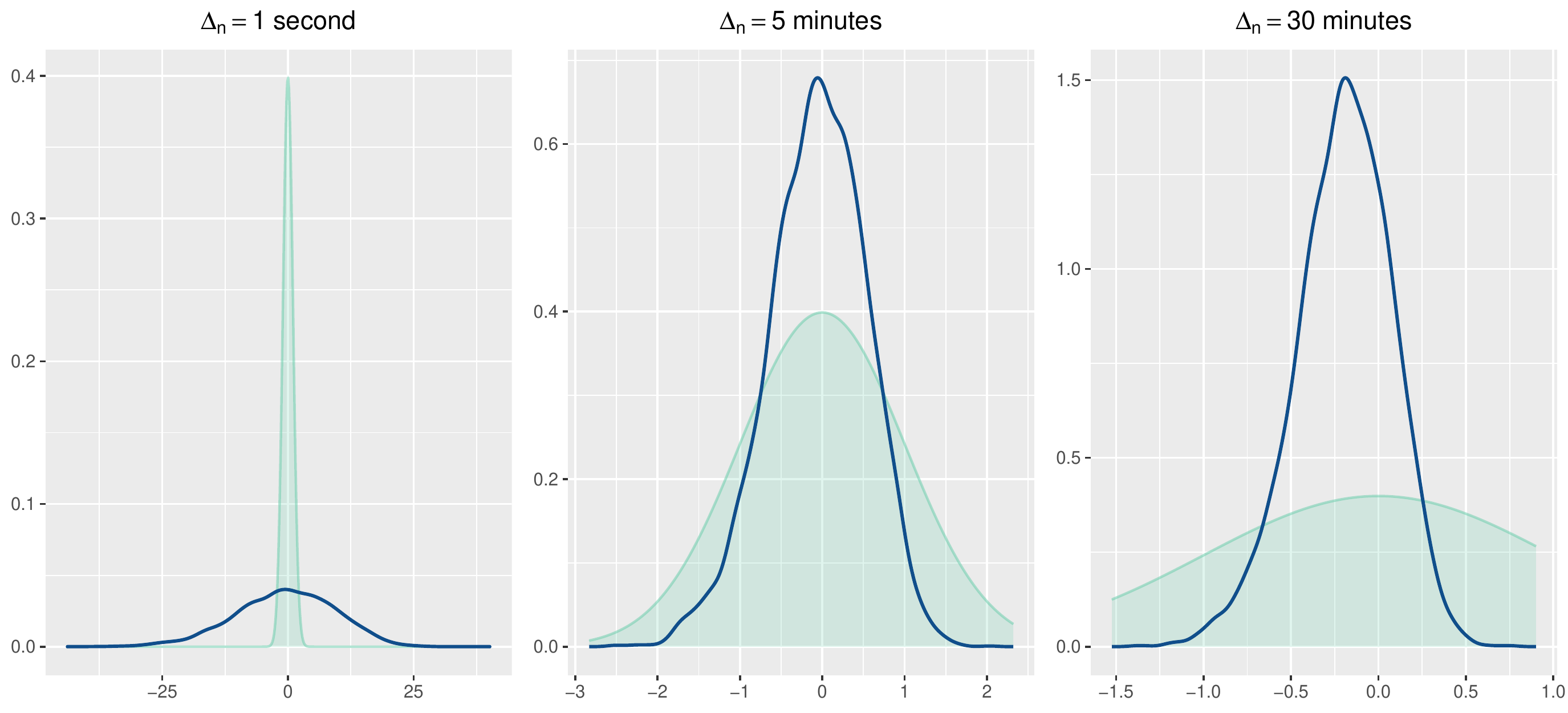}
\caption{Density plots for the centred and scaled errors for estimating $V(Y,2)(t)$ in the feasible case, for $\Delta_n = 1s, 5m, 30m$.} 
\label{fig:non_parametric_clt_density}
\centering
\end{figure}

\section{Application to turbulence data}
We apply the methodology developed above to perform inference on the underlying volatility of turbulence data. The data comprises observations made at \text{Brookhaven National Laboratory} (Long Island, NY), and consists of a time series of the main component of a turbulent velocity vector, measured at a fixed position in the atmospheric boundary layer using a hotwire anemometer, during an approximately 66 minutes long observation period at sampling frequency of 5 kHz (i.e. 5000 observations per second). A comprehensive account of the data has been given by \cite{dhruva2001experimental} and has already been studied using Brownian semistationary processes to  
test Kolmogorov's 5/3-law, that within the so-called \textit{inertial range} of frequencies, we should observe a smoothness parameter of approximately $-1/6$. As discussed in \cite{corcuera2013asymptotic}, a suitable inertial range for this dataset is $0.1Hz -  100Hz$, and thus we consider the performance of our estimators on time series subsampled from the original data to these frequencies. 

We first estimate the smoothness parameter $\alpha$ using the change-of-frequency method described in \cite{barndorff2014assessing} over a range of frequencies and compare this with the theoretical value from Kolmogorov's law. Figure \ref{fig:alpha_cof_freqs} shows that our estimates of $\alpha$ fit reasonably well with the theoretical law, especially in the middle range of frequencies, between $1.0Hz - 10Hz$. As the frequency increases, the estimate of $\alpha$ also increases, indicating that the paths appear smoother at very small timescales, a point noted in \cite{barndorff2014assessing}. Having estimated $\alpha$, we may then compare estimates of the scale factor $\tau_n$ derived through $\tau_n^{COF}$ and $\tau_n^{Y,N}$, comparing them with $\tilde{\tau}_n$ with $\alpha = -1/6$. Figure \ref{fig:tau_plot2} demonstrates the similarity between the two estimates over a range of the entire range of frequencies. 

We next compare the approaches to estimate the integrated squared volatility $V(Y, 2)(t)$. We estimate this at different frequencies using the non-parametric method and the change of frequency estimator for the scale factors. As witnessed in Figure \ref{fig:tau_plot2}, the change of frequency tends to underestimate the scale factor for high frequencies, since it overestimates $\alpha$. This in turn leads to an overestimate of the integrated squared volatility, as the more of the variation in the paths is attributed to the volatility process as rather than the kernel. 

Figure \ref{fig:int_vol} shows estimates for $V(Y, 2)(t)$ over an interval of one minute, where the data is subsampled at a frequency of $1.5Hz$. Since the only difference between the estimators is the scale factor, they all follow the same path, up to that factor, and we can see that at this frequency the change of frequency method is overestiamting the integrated volatility. Figure \ref{fig:acc_vol2} compares estimates for $V(Y, 2)(T)$ at time $T = 6$ minutes at different frequencies, where the scale factor estimates are calculated using the full paths. We can see again that the non-parametric method agrees well with Kolmogorov's law across the range of frequencies but that at frequencies above $10Hz$, estimating the smoothness parameter creates divergence in that estimate.
We can conclude from this brief data example that the non-parametric feasible estimator does give reliable and stable estimates of the integrated volatility, over a wide range of sampling frequencies, where the best parametric alternative only provides reliable estimates for a limited range.

\begin{figure}[t]
\begin{subfigure}{0.5\textwidth}
\includegraphics[scale = 0.5]{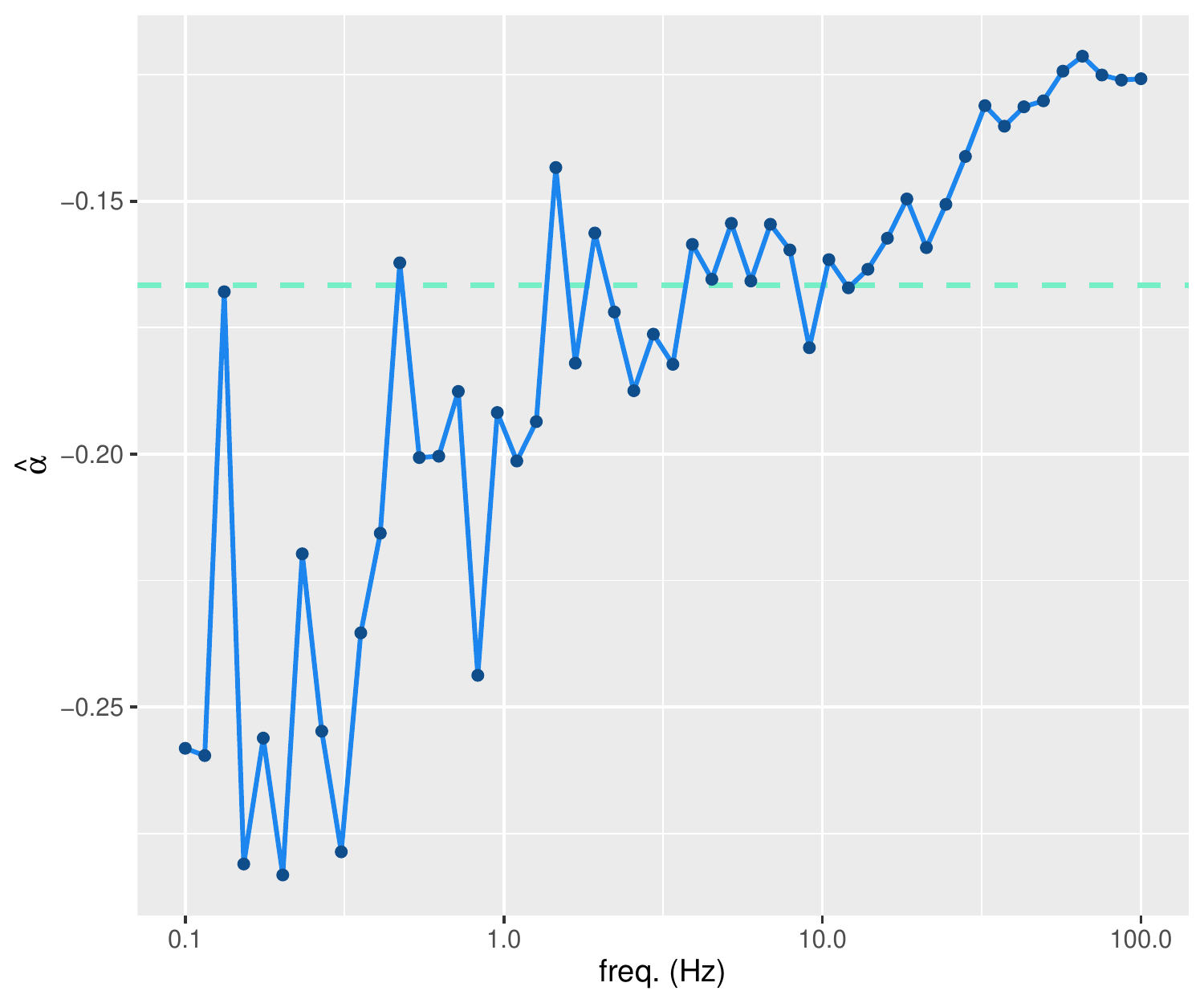}
\caption{}
\label{fig:alpha_cof_freqs}
\end{subfigure}
\begin{subfigure}{0.5\textwidth}
\includegraphics[scale = 0.5]{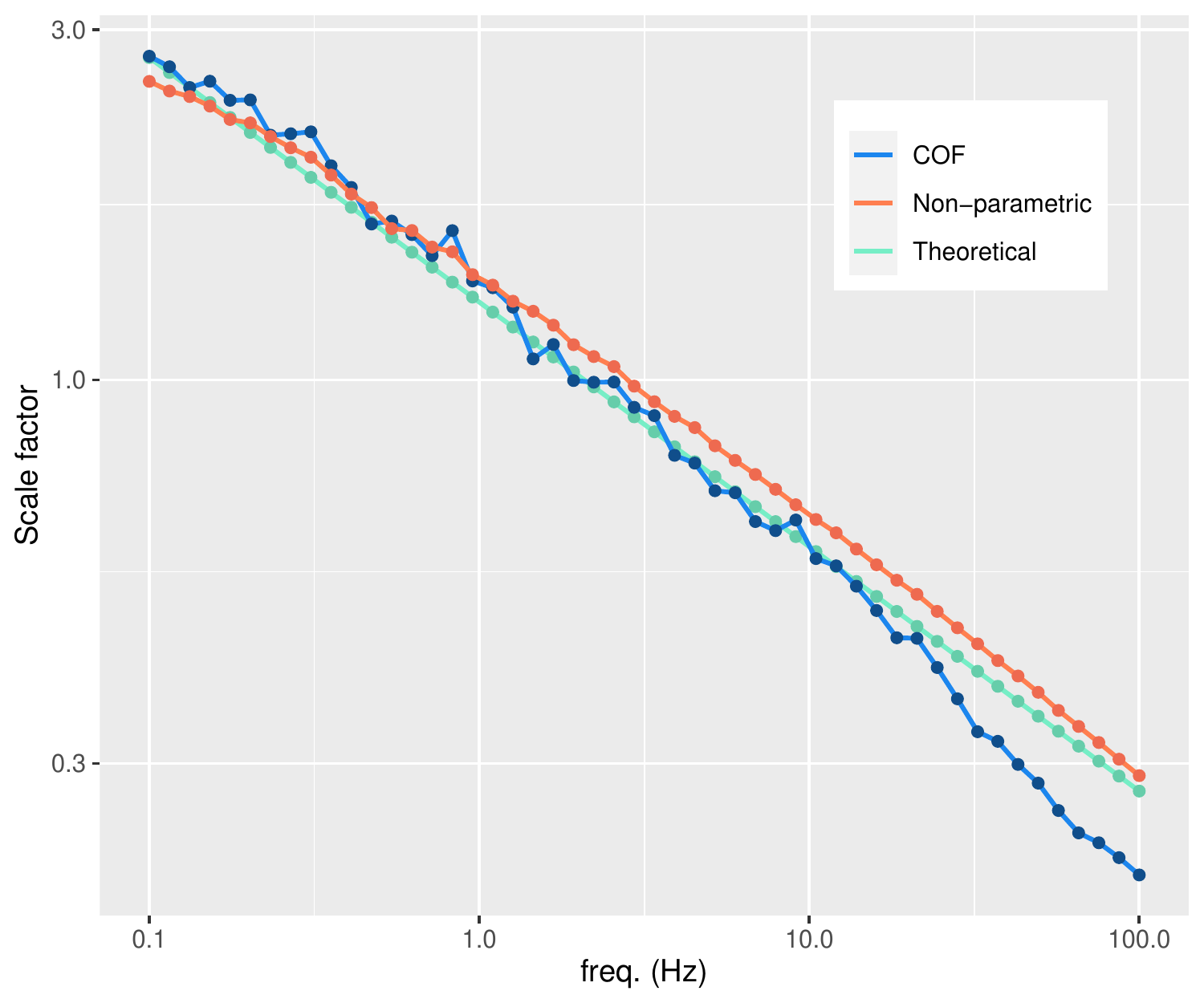}
\caption{}
\label{fig:tau_plot2}
\end{subfigure}
\caption{Brookhaven turbulence data: (a) Estimation of the smoothness parameter $\alpha$ using the change of frequency method, and (b) comparison of estimates of $\tau_n$ using change of frequency and non-parametric methods.} 
\end{figure}

\begin{figure}[t]
\begin{subfigure}{0.5\textwidth}
\includegraphics[scale = 0.5]{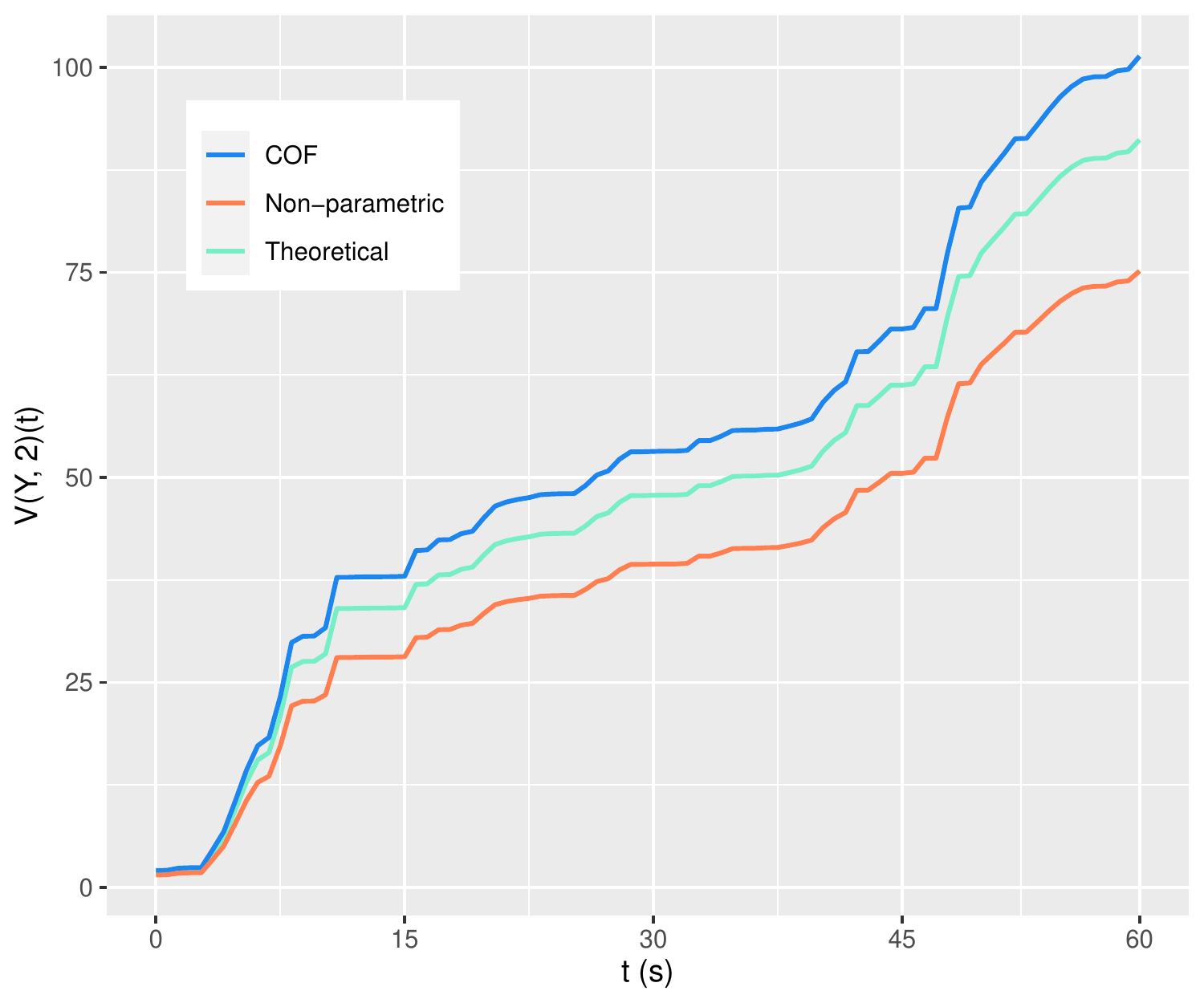}
\caption{}
\label{fig:int_vol}
\end{subfigure}
\begin{subfigure}{0.5\textwidth}
\includegraphics[scale = 0.5]{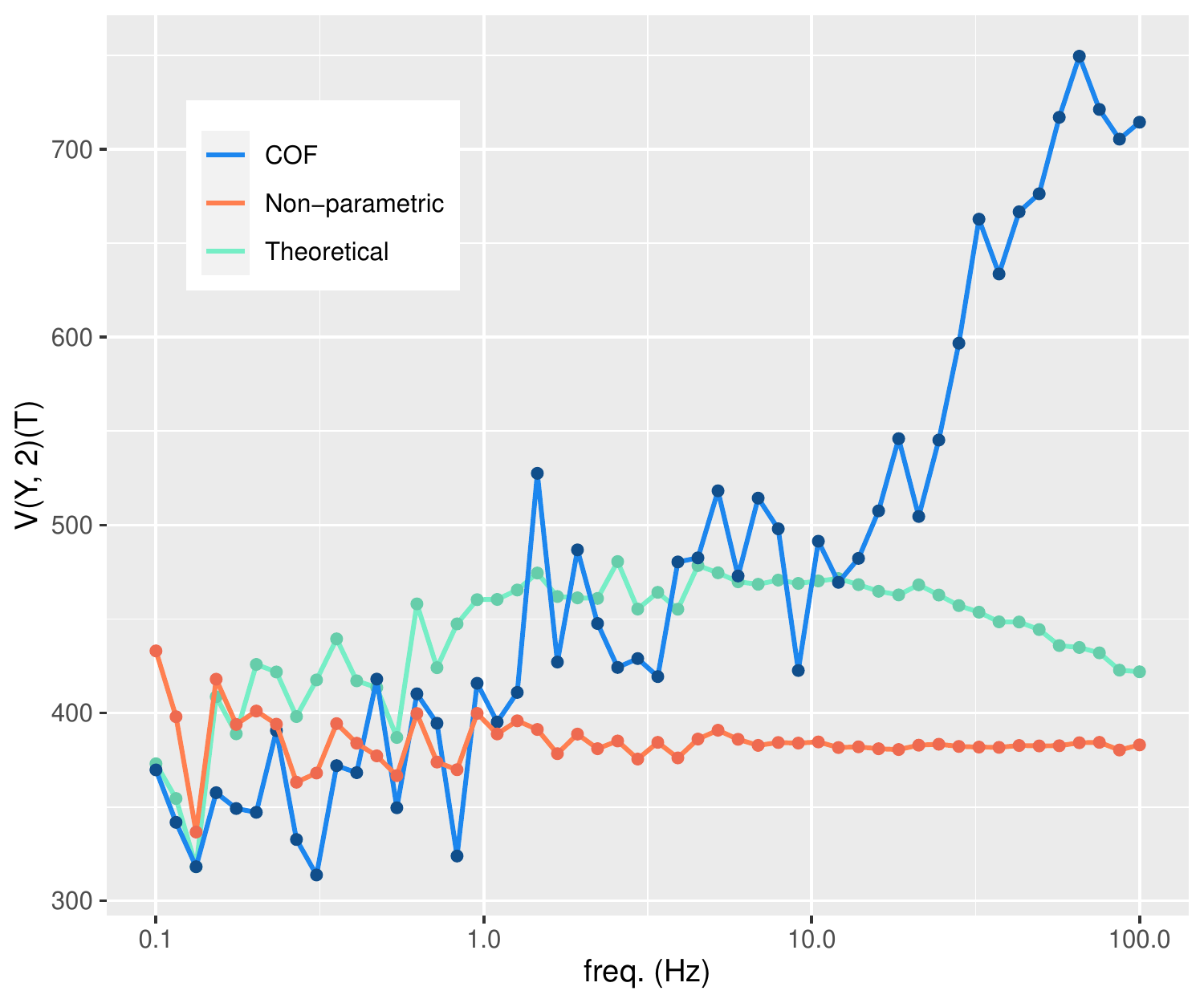}
\caption{}
\label{fig:acc_vol2}
\end{subfigure}
\caption{Brookhaven turbulence data: (a) Estimation of the integrated squared volatility over the first minute interval, using data sampled at $1.5Hz$, and (b) comparison of estimates of the integrated squared volatility at $T = 6$ minutes, averaged over five intervals of length 6 minutes.} 
\end{figure}

\section{Conclusion}
In this article we present and compare a number of estimators for the integrated power volatility process of a Brownian semistationary process. We have established a consistent non-parametric estimator for the scale factor used in the non-semi-martingale setting, which in turn gave us a feasible estimator for the integrated power volatility process, which also obeys a limit theorem. In the simulation study, we have compared the convergence properties of the three feasible estimators against the `ideal' benchmark of the infeasible estimator, where the scale factor is known. We have shown experimentally that the non-parametric estimator has the lowest error when estimating the integrated volatility, and converges well as $\Delta_n \rightarrow 0$.

We further demonstrated the convergence of the central limit theorem in the infeasible case, and tested whether the central limit theorem can be translated to the feasible case, by replacing all quantities with estimators, and showed that although the shape of the distribution appears Gaussian, the rate of convergence is slower than $\Delta_n^{1/2}$, so that a feasible confidence interval based on $z$ values scaled by $\Delta_n^{1/2}$ would not be appropriate. We finally apply the methods to estimation of the volatility in a turbulence dataset, and demonstrate that the nonparametric estimator produces results consistent with Kolmogorov's law across a range of frequencies, where the method produced by estimating the smoothness parameter may fail at higher frequencies due to overestimation of the smoothness of the paths.

\section{Proofs}
\subsection{Proof of Proposition \ref{prop:clt_scale_factor}}
\begin{proof}

We begin by squaring the estimator to obtain
\begin{align*}
\begin{split}
(\tau_n^N)^2 &= \frac{1}{N} \sum_{i=1}^{N} (\Delta_i^n X)^2 \\
&= \frac{1}{N} \sum_{i=1}^{N}  \left(X_{i\Delta_n} - X_{(i-1)\Delta_n}\right)^2 \\
&= \frac{1}{N} \sum_{i=1}^{N}  \big( X^2_{i\Delta_n} + X^2_{(i-1)\Delta_n} - 2X_{i\Delta_n} X_{(i-1)\Delta_n} \big) \\
&=  2 \hat{\gamma}(0) + \frac{1}{N}\left( X_0^2 - X_{N\Delta_n}^2\right) - 2\hat{\gamma}(\Delta_n).
\end{split}
\end{align*}

Therefore, writing $\mathbf{a} = (2, -2)^T$, $\boldsymbol{\gamma} = (\gamma(0), \gamma(\Delta_n))^T$ and $\hat{\boldsymbol{\gamma}} = (\hat{\gamma}(0), \hat{\gamma}(\Delta_n))^T$, we may write 

$$\sqrt{N} \left( (\tau_n^N)^2 - \tau_n^2 \right) = \sqrt{N}\mathbf{a}^T \left( \hat{\boldsymbol{\gamma}} - \boldsymbol{\gamma} \right) + \frac{1}{\sqrt{N}}\left( X_0^2 - X_{N\Delta_n}^2\right).$$

Now consider the term $\frac{1}{\sqrt{N}}\left( X_0^2 - X_{N\Delta_n}^2\right)$ first. By applying the triangle inequality and taking expectations we have 

$$\mathbb{E}\left[ \vert X_0^2 - X_{N\Delta_n}^2 \vert \right] \leq \mathbb{E}\left[X_0^2\right] +\mathbb{E}\left[ X_{N\Delta_n}^2 \right] = 2\gamma(0) < \infty $$
which implies that 

$$\frac{1}{\sqrt{N}} \mathbb{E} \left[ \vert  X_0^2 - X_{N\Delta_n}^2 \vert \right] \rightarrow 0 \qquad \text{as} \ N \rightarrow \infty$$
which shows the $L^1$ convergence of this term to zero. Then using the limit theorem for the sample autocovariance, we have

$$ \sqrt{N}\mathbf{a}^T \left( \hat{\boldsymbol{\gamma}} - \boldsymbol{\gamma} \right) \xrightarrow{d} N(0,\mathbf{a}^T V \mathbf{a}).$$

But $\mathbf{a}^T V \mathbf{a} = 4V_{00} - 8V_{01} + 4V_{11} := v$ and hence in the limit as $N \rightarrow \infty$, with an application of Slutsky's theorem, we have the convergence

$$ \sqrt{N} \left( (\tau_n^N)^2 - \tau_n^2 \right) \xrightarrow{d} N(0,v),$$
and then with an application of the Delta method with $f(\theta) = \theta^{1/2}$ and $\theta = \tau_n^2$, we have the claimed result.
\end{proof}

\subsection{Proof of Proposition \ref{prop:feasible_scale_factor_convergence}}

\begin{proof}
It is sufficient to prove that 

$$ \mathbb{E}\left[ \left| \left(\frac{\tau_n^{Y,N}}{\tau_n^Y}\right)^2 -1 \right|\wedge 1 \right] \leq n^{-(\frac{1}{2} + \delta)}.$$

Observe that
\begin{equation*}
\mathbb{E}\left[\left|\left(\frac{\tau_n^{Y,N}}{\tau_n^Y}\right)^{2}-1\right|\wedge 1 \right]\leq \frac{1}{(\tau_n^Y)^{2}}\mathbb{E}\left[\left| (\tau_n^{Y,N})^{2}-(\tau_n^Y)^{2}\right| \right]\leq \frac{1}{(\tau_n^Y)^{2}}\sqrt{\mathbb{E}\left[\left( (\tau_n^{Y,N})^{2}-(\tau_n^Y)^{2}\right)^{2} \right]}
\end{equation*}
\begin{equation*}
=\frac{1}{\mathbb{E}(\Delta_{1}^{n} Y)^{2}}\sqrt{\mathbb{E}\left[\left( \frac{1}{N}\sum_{i=1}^{N} (\Delta_{i}^{n} Y)^{2}-\mathbb{E}(\Delta_{1}^{n} Y)^{2}\right)^{2} \right]} 
\end{equation*}
\begin{equation*}
= \frac{1}{\mathbb{E}(\Delta_{1}^{n} Y)^{2}}\sqrt{\frac{1}{N^{2}}\sum_{i=1}^{N}\sum_{j=1}^{N}\mathbb{E}\left[\Big((\Delta^{n}_{i}Y)^{2}-\mathbb{E}\left[(\Delta^{n}_{1}Y)^{2}\right]\Big)\Big((\Delta^{n}_{j}Y)^{2}-\mathbb{E}\left[(\Delta^{n}_{1}Y)^{2}\right]\Big)\right]}
\end{equation*}
Let us concentrate now on the argument in the above square root. We show an explicit bound for
\begin{equation}\label{CCc}
\mathbb{E}\left[\Big((\Delta^{n}_{i}Y)^{2}-\mathbb{E}\left[(\Delta^{n}_{1}Y)^{2}\right]\Big)\Big((\Delta^{n}_{j}Y)^{2}-\mathbb{E}\left[(\Delta^{n}_{1}Y)^{2}\right]\Big)\right].
\end{equation}

Since the distribution of $\Delta^{n}_{1}Y$ conditioned on $\sigma$ is Gaussian, we may write $\Delta^{n}_{1}Y|\sigma\sim N(0,\Sigma)$ for some positive random variable $\Sigma$, and we have that for $i=j$
	\begin{equation*}
	\mathbb{E}\left[\left((\Delta^{n}_{i}Y)^{2}-\mathbb{E}\left[(\Delta^{n}_{1}Y)^{2}\right]\right)^{2}\right]=\mathbb{E}\left[(\Delta^{n}_{i}Y)^{4}\right]-\mathbb{E}\left[(\Delta^{n}_{1}Y)^{2}\right]^{2}
	\end{equation*}
		\begin{equation*}
=\mathbb{E}\left[\mathbb{E}\left[(\Delta^{n}_{1}Y)^{4}|\sigma\right]\right]-\mathbb{E}\left[\mathbb{E}\left[(\Delta^{n}_{1}Y)^{2}|\sigma\right]\right]^{2}=3\mathbb{E}[\Sigma^{2}]-\mathbb{E}[\Sigma]^{2}
		\end{equation*}
		\begin{equation}\label{0}
		\leq 3\mathbb{E}[\Sigma^{2}] \leq 4A_{1}A_{7}
		\end{equation}
		
		For $i>j$, the discussion is more articulated. First, observe that we have
	\begin{equation*}
	\mathbb{E}\left[\left((\Delta^{n}_{i}Y)^{2}-\mathbb{E}\left[(\Delta^{n}_{1}Y)^{2}\right]\right)\left((\Delta^{n}_{j}Y)^{2}-\mathbb{E}\left[(\Delta^{n}_{1}Y)^{2}\right]\right)\right]
	\end{equation*}	
	\begin{equation*}
	=\mathbb{E}\left[(\Delta^{n}_{i}Y)^{2}(\Delta^{n}_{j}Y)^{2}\right]-\mathbb{E}\left[(\Delta^{n}_{1}Y)^{2}\right]^{2}=\mathbb{E}\left[\mathbb{E}\left[(\Delta^{n}_{i}Y)^{2}(\Delta^{n}_{j}Y)^{2}|\sigma\right]\right]-\mathbb{E}\left[\mathbb{E}\left[(\Delta^{n}_{1}Y)^{2}|\sigma\right]\right]^{2}
	\end{equation*}
	then by Isserlis's theorem (also known as Wick's theorem) we have
	\begin{equation}\label{Wick}
	=\mathbb{E}\left[\mathbb{E}\left[(\Delta^{n}_{i}Y)^{2}|\sigma\right]\mathbb{E}\left[(\Delta^{n}_{j}Y)^{2}|\sigma\right]\right]+2\mathbb{E}\left[\mathbb{E}\left[(\Delta^{n}_{i}Y)(\Delta^{n}_{j}Y)|\sigma\right]^{2}\right]-\mathbb{E}\left[\mathbb{E}\left[(\Delta^{n}_{1}Y)^{2}|\sigma\right]\right]^{2}.
	\end{equation}
	Now, observe that 
	\begin{equation*}
	\mathbb{E}\left[\mathbb{E}\left[(\Delta^{n}_{i}Y)(\Delta^{n}_{j}Y)|\sigma\right]^{2}\right]=\mathbb{E}\Bigg[\Bigg(\int_{(j-1)\Delta_{n}}^{j\Delta_{n}} g(j\Delta_{n}-s)\left(g(\Delta_{n}i-s)-g(\Delta_{n}(i-1)-s)\right)\sigma_{s}^{2}ds
	\end{equation*}
	\begin{equation*}
	+\int_{-\infty}^{(j-1)\Delta_{n}}\left(g(j\Delta_{n}-s)-g((j-1)\Delta_{n}-s)\right)\left(g(\Delta_{n}i-s)-g(\Delta_{n}(i-1)-s)\right)\sigma_{s}^{2}ds\Bigg)^{2}\Bigg]
	\end{equation*}
	\begin{equation*}
	\leq A_{1} \Bigg(\int_{(j-1)\Delta_{n}}^{j\Delta_{n}} g(j\Delta_{n}-s)\left(g(\Delta_{n}i-s)-g(\Delta_{n}(i-1)-s)\right)ds
	\end{equation*}
	\begin{equation*}
	+\int_{-\infty}^{(j-1)\Delta_{n}}\left(g(j\Delta_{n}-s)-g((j-1)\Delta_{n}-s)\right)\left(g(\Delta_{n}i-s)-g(\Delta_{n}(i-1)-s)\right)ds\Bigg)^{2}
	\end{equation*}
	Then, by Cauchy-Schwarz inequality we have
	\begin{equation*}
	\leq A_{1} \Bigg(\sqrt{\int_{(j-1)\Delta_{n}}^{j\Delta_{n}} g(j\Delta_{n}-s)^{2}ds\int_{(j-1)\Delta_{n}}^{j\Delta_{n}}\left(g(\Delta_{n}i-s)-g(\Delta_{n}(i-1)-s)\right)^{2}ds}
	\end{equation*}
	\begin{equation*}
	+\sqrt{\int_{-\infty}^{(j-1)\Delta_{n}}g((j-1)\Delta_{n}-s)^{2}ds\int_{-\infty}^{(j-1)\Delta_{n}}\left(g(\Delta_{n}i-s)-g(\Delta_{n}(i-1)-s)\right)^{2}ds}\Bigg)^{2}
	\end{equation*}
	\begin{equation}\label{first}
	\leq 4A_{1} \int_{-\infty}^{0}g(-s)^{2}ds\int_{-\infty}^{j\Delta_{n}}g(\Delta_{n}(i-1)-s)^{2}ds
	\end{equation}
	Now, let us focus on the last integral. Recall that by assumption $g(x)< A_{5}e^{-A_{6}x}$ for $x> K$. Then, for $i-j<\frac{K}{\Delta_{n}}+1$ we have that
	\begin{equation*}
	\int_{-\infty}^{j\Delta_{n}}g(\Delta_{n}(i-1)-s)^{2}ds\leq \int_{-\infty}^{0}g(-s)^{2}ds\leq \int_{-\infty}^{0}g(-s)^{2}ds \frac{e^{-2\Delta_{n}(i-j)}}{e^{-2\Delta_{n}(\frac{K}{\Delta_{n}}+1)}}
	\end{equation*}
	while for $i-j\geq\frac{K}{\Delta_{n}}+1$ we have
	\begin{equation*}
	\int_{-\infty}^{j\Delta_{n}}g(\Delta_{n}(i-1)-s)^{2}ds=\int_{-\infty}^{j\Delta_{n}}e^{-2\Delta_{n}(i-1)+2s}ds=\frac{1}{2}e^{-2\Delta_{n}(i-j)}e^{2\Delta_{n}}
	\end{equation*}
	Hence, combining the above two cases we have 
	\begin{equation*}
	\int_{-\infty}^{j\Delta_{n}}g(\Delta_{n}(i-1)-s)^{2}ds\leq e^{-2\Delta_{n}(i-j)}e^{2\Delta_{n}}\left(1+e^{2K}\int_{-\infty}^{0}g(-s)^{2}ds \right)
	\end{equation*}
	Therefore, we have that $(\ref{first})$ is bounded by
	\begin{equation}\label{1}
e^{-2\Delta_{n}(i-j)}4A_{1}A_{7}e^{2\Delta_{n}}\left(1+e^{2K}A_{7} \right)
	\end{equation}
	
	Now let us look at the other terms in (\ref{Wick}). We have
	\begin{equation*}
	\mathbb{E}\left[\mathbb{E}\left[(\Delta^{n}_{i}Y)^{2}|\sigma\right]\mathbb{E}\left[(\Delta^{n}_{j}Y)^{2}|\sigma\right]\right]-\mathbb{E}\left[\mathbb{E}\left[(\Delta^{n}_{1}Y)^{2}|\sigma\right]\right]^{2}
	\end{equation*}
	\begin{equation*}
	=\mathbb{E}\Bigg[\Bigg(\int_{(i-1)\Delta_{n}}^{i\Delta_{n}} g(i\Delta_{n}-s)^{2} \sigma_{s}^{2} ds+\int_{-\infty}^{(i-1)\Delta_{n}} (g(i\Delta_{n}-s)-g((i-1)\Delta_{n}-s))^{2} \sigma^{2}_{s} ds\Bigg)
	\end{equation*}
	\begin{equation*}
	\Bigg(\int_{(j-1)\Delta_{n}}^{j\Delta_{n}} g(j\Delta_{n}-s)^{2} \sigma_{s}^{2} ds+\int_{-\infty}^{(j-1)\Delta_{n}} (g(j\Delta_{n}-s)-g((j-1)\Delta_{n}-s))^{2} \sigma^{2}_{s} ds\Bigg)\Bigg]-\mathbb{E}[\Sigma]^{2},
	\end{equation*}
	\begin{equation*}
	=\mathbb{E}\Bigg[\Bigg(\int_{0}^{\Delta_{n}} g(\Delta_{n}-s)^{2} \sigma_{s+(i-1)\Delta_{n}}^{2} ds+\int_{-\infty}^{0} (g(\Delta_{n}-s)-g(-s))^{2} \sigma^{2}_{s+(i-1)\Delta_{n}} ds\Bigg)
	\end{equation*}
	\begin{equation*}
	\Bigg(\int_{0}^{\Delta_{n}} g(\Delta_{n}-s)^{2} \sigma_{s+(j-1)\Delta_{n}}^{2} ds+\int_{-\infty}^{0} (g(\Delta_{n}-s)-g(-s))^{2} \sigma^{2}_{s+(j-1)\Delta_{n}} ds\Bigg)\Bigg]
	\end{equation*}
	\begin{equation*}
	-\mathbb{E}\Bigg[\int_{0}^{\Delta_{n}} g(\Delta_{n}-s)^{2} \sigma_{s}^{2} ds+\int_{-\infty}^{0} (g(\Delta_{n}-s)-g(-s))^{2} \sigma^{2}_{s} ds\Bigg]^{2}
	\end{equation*}
	\begin{equation*}
	=\int_{0}^{\Delta_{n}}\int_{0}^{\Delta_{n}} g(\Delta_{n}-s)^{2}g(\Delta_{n}-r)^{2} \left(\mathbb{E}[\sigma_{s+(i-1)\Delta_{n}}^{2} \sigma_{r+(j-1)\Delta_{n}}^{2}]-\mathbb{E}[\sigma^{2}_{1}]^{2}\right)drds
	\end{equation*}
	\begin{equation*}
	+\int_{-\infty}^{0}\int_{-\infty}^{0} (g(\Delta_{n}-s)-g(-s))^{2}(g(\Delta_{n}-r)-g(-r))^{2} \left(\mathbb{E}[\sigma_{s+(i-1)\Delta_{n}}^{2} \sigma_{r+(j-1)\Delta_{n}}^{2}]-\mathbb{E}[\sigma^{2}_{1}]^{2}\right)drds
	\end{equation*}
	\begin{equation*}
	+\int_{0}^{\Delta_{n}}\int_{-\infty}^{0}g(\Delta_{n}-s)^{2}(g(\Delta_{n}-r)-g(-r))^{2}\left(\mathbb{E}[\sigma_{s+(i-1)\Delta_{n}}^{2} \sigma_{r+(j-1)\Delta_{n}}^{2}]-\mathbb{E}[\sigma^{2}_{1}]^{2}\right)drds
	\end{equation*}
	\begin{equation}\label{integrability}
	+\int_{0}^{\Delta_{n}}\int_{-\infty}^{0}g(\Delta_{n}-r)^{2}(g(\Delta_{n}-s)-g(-s))^{2}\left(\mathbb{E}[\sigma_{s+(i-1)\Delta_{n}}^{2} \sigma_{r+(j-1)\Delta_{n}}^{2}]-\mathbb{E}[\sigma^{2}_{1}]^{2}\right)drds
	\end{equation}
	Now by assumption we have that
	\begin{equation*}
	\mathbb{E}[\sigma_{s+(i-1)\Delta_{n}}^{2} \sigma_{r+(j-1)\Delta_{n}}^{2}]-\mathbb{E}[\sigma^{2}_{1}]^{2}\leq A_{3}\exp\left(-A_{4}|s+(i-1)\Delta_{n}-r+(j-1)\Delta_{n}|\right)
	\end{equation*}
	\begin{equation*}
	\leq A_{3}\exp(A_{4}(-\Delta_{n}(i-j)-s+r))=A_{3} e^{-A_{4}\Delta_{n}(i-j)} e^{-A_{4}s+A_{4}r}
	\end{equation*}
	Moreover, observe that for each $s\in(-\infty,-K)$ we have
	\begin{equation*}
	g(-s)^{2}e^{-s}\leq A_{5}e^{(2A_{6}-A_{4})s}
	\end{equation*}
	but since $2A_{6}>A_{4}$ by assumption we have that all the integrals in $(\ref{integrability})$ are finite. In particular, $(\ref{integrability})$ is bounded by
	\begin{equation*}
	4A_{3}e^{-A_{4}\Delta_{n}(i-j)}\int_{-\infty}^{\Delta_{n}}\int_{-\infty}^{\Delta_{n}} g(\Delta_{n}-s)^{2}g(\Delta_{n}-r)^{2}e^{-A_{4}s+A_{4}r} drds
	\end{equation*}
	\begin{equation*}
	\leq 4e^{-A_{4}\Delta_{n}(i-j)}A_{3}A_{7}\int_{-\infty}^{\Delta_{n}} g(\Delta_{n}-s)^{2}e^{-A_{4}s} ds
	\end{equation*}
	\begin{equation*}
	\leq 4e^{-A_{4}\Delta_{n}(i-j)}A_{3}A_{7}\left(\int_{0}^{\Delta_{n}} g(\Delta_{n}-s)^{2}e^{-A_{4}s} ds+\int_{-\infty}^{0} g(-s)^{2}e^{-A_{4}s} ds\right)
	\end{equation*}
	\begin{equation}\label{2}
	\leq e^{-A_{4}\Delta_{n}(i-j)}\frac{8A_{7}A_{5}A_{3}}{2A_{6}-A_{4}}.
	\end{equation}
	Thus, using $(\ref{0})$, $(\ref{1})$ and $(\ref{2})$ we obtain that $(\ref{CCc})$ is bounded by
	\begin{equation*}
	e^{-2\Delta_{n}(i-j)}4A_{1}A_{7}e^{2\Delta_{n}}\left(1+e^{2K}A_{7} \right) +e^{-A_{4}\Delta_{n}(i-j)}\frac{8A_{7}A_{5}A_{3}}{2A_{6}-A_{4}}
	\end{equation*}
		\begin{equation*}
		\leq e^{-(2\wedge A_{4})\Delta_{n}(i-j)}4A_{7}\left(A_{1}e^{2\Delta_{n}}\left(1+e^{2K}A_{7} \right) +\frac{2A_{5}A_{3}}{2A_{6}-A_{4}}\right)=C_{n}e^{-D_{n}(i-j)}
		\end{equation*}
	where we have used that $(\ref{0})$ is smaller than $(\ref{1})$ (so that we cover the case of $i=j$ by using the bounds for $i>j$). Then, we have that
	\begin{equation*}
	\mathbb{E}\left[\left|\left(\frac{\tau_n^{Y,N}}{\tau_n^Y}\right)^{2}-1\right|\wedge 1 \right]\leq \frac{1}{\tilde{A}_{n}}\sqrt{\frac{1}{N^{2}}\sum_{i=1}^{N}\sum_{j=1}^{N}C_{n}e^{-D_{n}(i-j)}}
	\end{equation*}
	\begin{equation*}
	=\frac{\sqrt{C_{n}}}{N\tilde{A}_{n}}\sqrt{N+\frac{2e^{-D_{n}}}{(e^{-D_{n}}-1)^{2}}(e^{-(N+1)D_{n}}+1)+\frac{2e^{-D_{n}}}{e^{-D_{n}}-1}N}
	\end{equation*}
	\begin{equation*}
	\leq \frac{1}{\sqrt{N}}\frac{\sqrt{C_{n}}}{\tilde{A}_{n}}\sqrt{1+\frac{2e^{-D_{n}}}{(e^{-D_{n}}-1)^{2}}(e^{-D_{n}}+1)}\leq \frac{1}{\sqrt{N}}\frac{3\sqrt{C_{n}}}{\tilde{A}_{n}(1-e^{-D_{n}})} =\frac{1}{\sqrt{N}}E_{n}
	\end{equation*}
	and so it converges to zero as $N\to\infty$. Notice that $E_{n}<C\frac{e^{\Delta_{n}}}{\tilde{A}_{n}(1-e^{-D_{n}})}<\frac{C'}{\tilde{A}_{n}(1-e^{-D_{n}})}$, where $C$ and $C'$ do not depend on $n$.
	
	Finally, we have that 
	\begin{equation*}
	\frac{1}{\sqrt{N}}E_{n}\leq n^{-(\frac{1}{2}+\delta)}\Rightarrow N \geq E_{n}^{2}n^{1+2\delta}.
	\end{equation*}
\end{proof}

\subsection{Proof of Theorem \ref{thm:feasible_clt}}

\begin{proof}
By Proposition \ref{prop:feasible_scale_factor_convergence} we have that
\begin{equation}\label{rhoDelta}
\mathbb{E}\left[\left|\Delta_n^{-1/2}\left[\frac{(\tau_n^{Y,N})^{-p}}{(\tau_n^{Y})^{-p}}-1\right]\right|\wedge 1\right]<n^{-\delta}.
\end{equation}
Thus, we have that
\begin{equation*}
\Delta_n^{-1/2}\left[\frac{(\tau_n^{Y,N})^{-p}}{(\tau_n^{Y})^{-p}}-1\right]\stackrel{\mathbb{P}}{\rightarrow}0,\quad\textnormal{as $n\to\infty$}.
\end{equation*}
From Theorem \ref{thm:infeasible_weak_law} we know that $\Delta_n (\tau_n^{Y})^{-p}V(Y,p;\Delta_n)(t)$ converges in probability and so by Slutsky's theorem we have
\begin{equation*}
\Delta_n^{-1/2}\left[\frac{(\tau_n^{Y,N})^{-p}}{(\tau_n^{Y})^{-p}}-1\right]\Delta_n (\tau_n^{Y})^{-p}V(Y,p;\Delta_n)(t)\stackrel{\mathbb{P}}{\rightarrow}0,
\end{equation*}
and hence,
\begin{equation*}
\Delta_n^{1/2}(\tau_n^{Y,N})^{-p}V(Y,p;\Delta_n)(t)-\Delta_n^{1/2}(\tau_n^Y)^{-p}V(Y,p;\Delta_n)(t)\stackrel{\mathbb{P}}{\rightarrow}0.
\end{equation*}
Finally, from Theorem \ref{thm:infeasible_clt} we obtain the stated result. In particular, we have that
\begin{equation*}
	\Delta_n^{-1/2}\left(\Delta_n (\tau_n^{Y,N})^{-p}V(Y,p;\Delta_n)(t) -[\mathbb{E}(\sigma_1^2)]^{-p/2}V(Y,p)(t)\right)
\end{equation*}
\begin{equation*}
=\Delta_n^{-1/2}\left(\Delta_n (\tau_n^Y)^{-p}V(Y,p;\Delta_n)(t) -[\mathbb{E}(\sigma_1^2)]^{-p/2}V(Y,p)(t)\right)
\end{equation*}
\begin{equation*}
+\Delta_n^{1/2}(\tau_n^{Y,N})^{-p}V(Y,p;\Delta_n)(t)-\Delta_n^{1/2}(\tau_n^Y)^{-p}V(Y,p;\Delta_n)(t)
\end{equation*}
\begin{equation*}
=[\mathbb{E}(\sigma_1^2)]^{-p/2}\Delta_n^{-1/2}\left(\Delta_n \tau_n^{-p}V(Y,p;\Delta_n)(t) -V(Y,p)(t)\right)
\end{equation*}
\begin{equation}\label{final}
+\Delta_n^{1/2}(\tau_n^{Y,N})^{-p}V(Y,p;\Delta_n)(t)-\Delta_n^{1/2}(\tau_n^Y)^{-p}V(Y,p;\Delta_n)(t).
\end{equation}
The first summand in (\ref{final}) converges stably as showed in Theorem \ref{thm:infeasible_clt}, while the difference of the other two summands go to zero in probability as shown in this proof. Thus, we obtain the statement.
\end{proof}

\bibliographystyle{abbrv}
\bibliography{bibs/bibliography}

\begin{thebibliography}{10}

\bibitem{barndorff2012notes}
O.~E. Barndorff-Nielsen.
\newblock Notes on the gamma kernel.
\newblock {\em Thiele Research Reports, Department of Mathematics, Aarhus
  University}, 2012.

\bibitem{barndorff2013modelling}
O.~E. Barndorff-Nielsen, F.~E. Benth, and A.~E. Veraart.
\newblock Modelling energy spot prices by volatility modulated
  {L}{\'e}vy-driven {V}olterra processes.
\newblock {\em Bernoulli}, 19(3):803--845, 2013.

\bibitem{barndorff2018ambit}
O.~E. Barndorff-Nielsen, F.~E. Benth, and A.~E. Veraart.
\newblock {\em Ambit Stochastics}, volume~88.
\newblock Springer, 2018.

\bibitem{barndorff2006central}
O.~E. Barndorff-Nielsen, S.~E. Graversen, J.~Jacod, M.~Podolskij, and
  N.~Shephard.
\newblock A central limit theorem for realised power and bipower variations of
  continuous semimartingales.
\newblock In {\em From stochastic calculus to mathematical finance}, pages
  33--68. Springer, 2006.

\bibitem{barndorff2014assessing}
O.~E. Barndorff-Nielsen, M.~S. Pakkanen, J.~Schmiegel, et~al.
\newblock Assessing relative volatility/intermittency/energy dissipation.
\newblock {\em Electronic Journal of Statistics}, 8(2):1996--2021, 2014.

\bibitem{barndorff2007ambit}
O.~E. Barndorff-Nielsen and J.~Schmiegel.
\newblock Ambit processes; with applications to turbulence and tumour growth.
\newblock In {\em Stochastic analysis and applications}, pages 93--124.
  Springer, 2007.

\bibitem{barndorff2009brownian}
O.~E. Barndorff-Nielsen and J.~Schmiegel.
\newblock Brownian semistationary processes and volatility/intermittency.
\newblock {\em Advanced financial modelling}, 8:1--26, 2009.

\bibitem{bennedsen2017rough}
M.~Bennedsen.
\newblock A rough multi-factor model of electricity spot prices.
\newblock {\em Energy Economics}, 63:301--313, 2017.

\bibitem{bennedsen2020semiparametric}
M.~Bennedsen.
\newblock Semiparametric estimation and inference on the fractal index of
  gaussian and conditionally gaussian time series data.
\newblock {\em Econometric Reviews}, 39(9):875--903, 2020.

\bibitem{bennedsen2014discretization}
M.~Bennedsen, A.~Lunde, and M.~S. Pakkanen.
\newblock Discretization of {L}{\'e}vy semistationary processes with
  application to estimation.
\newblock {\em arXiv preprint arXiv:1407.2754}, 2014.

\bibitem{bennedsen2017hybrid}
M.~Bennedsen, A.~Lunde, and M.~S. Pakkanen.
\newblock Hybrid scheme for {B}rownian semistationary processes.
\newblock {\em Finance and Stochastics}, 21(4):931--965, 2017.

\bibitem{cohen2013central}
S.~Cohen and A.~Lindner.
\newblock A central limit theorem for the sample autocorrelations of a
  {L}{\'e}vy driven continuous time moving average process.
\newblock {\em Journal of Statistical Planning and Inference},
  143(8):1295--1306, 2013.

\bibitem{corcuera2013asymptotic}
J.~M. Corcuera, E.~Hedevang, M.~S. Pakkanen, and M.~Podolskij.
\newblock Asymptotic theory for {B}rownian semi-stationary processes with
  application to turbulence.
\newblock {\em Stochastic processes and their applications}, 123(7):2552--2574,
  2013.

\bibitem{dhruva2001experimental}
B.~R. Dhruva.
\newblock {\em An experimental study of high {R}eynolds number turbulence in
  the atmosphere.}
\newblock PhD thesis, Yale University, 2000.

\bibitem{jacod2008asymptotic}
J.~Jacod.
\newblock Asymptotic properties of realized power variations and related
  functionals of semimartingales.
\newblock {\em Stochastic processes and their applications}, 118(4):517--559,
  2008.

\bibitem{knight1992foundations}
F.~B. Knight.
\newblock {\em Foundations of the prediction process}, volume~1.
\newblock Oxford University Press, 1992.

\bibitem{kolmogorov1941dissipation}
A.~N. Kolmogorov.
\newblock Dissipation of energy in locally isotropic turbulence.
\newblock In {\em Akademiia Nauk SSSR Doklady}, volume~32, page~16, 1941.

\bibitem{package}
P.~Murray.
\newblock {\em BSS: Brownian Semistationary Processes}, 2020.
\newblock R package version 0.1.0.

\end{thebibliography}

\end{document}